\numberwithin{equation}{section}
\newtheorem{theorem}{Theorem}[section]
\newtheorem{lemma}[theorem]{Lemma}
\newtheorem{proposition}[theorem]{Proposition}
\newtheorem{corollary}[theorem]{Corollary}
\theoremstyle{definition}
\newtheorem{example}[theorem]{Example}
\theoremstyle{remark}
\newcommand{\C}{\mathbb{C}}
\newcommand{\R}{\mathbb{R}}
\newcommand{\SO}{\operatorname{SO}}
\newcommand{\OO}{\operatorname{O}}
\newcommand{\bs}{\boldsymbol}
\begin{document}

\title{Hilbert measures on orbit spaces of coregular $\OO_m$-modules}

\author[H.-C.~Herbig]{Hans-Christian Herbig\,\orcidlink{0000-0003-2676-3340}}
\address{Departamento de Matem\'{a}tica Aplicada, Universidade Federal do Rio de Janeiro,
Av. Athos da Silveira Ramos 149, Centro de Tecnologia - Bloco C, CEP: 21941-909 - Rio de Janeiro, Brazil}
\email{herbighc@gmail.com}

\author[C. Seaton]{Christopher W. Seaton,\orcidlink{0000-0001-8545-2599}}
\address{Department of Mathematics and Statistics,
Skidmore College, 815 North Broadway,
Saratoga Springs, NY 12866, USA}
\email{cseaton@skidmore.edu}

\author[L. Whitesell]{Lillian Whitesell}
\address{Ohio State University,
Dept of Graduate Mathematics,
Mathematics Tower,
231 W 18th Ave,
Columbus OH 43210, USA}
\email{whitesell.lily@gmail.com}

\keywords{classical invariant theory, orthogonal group, change of variables, orbit space, Gram matrix}
\subjclass[2020]{primary 57S15;	secondary 14L30, 28A99}

\thanks{C.S. was supported by an AMS-Simons Research Enhancement Grant for PUI Faculty.
L.W. was supported by a Rhodes College Summer Research Fellowship, the Robert Allen Scott award from the Jane Hyde Scott Foundation and Rhodes College,
and the National Science Foundation under Grant No. 2015553.}

\begin{abstract}
We construct canonical measures, referred to as \textit{Hilbert measures}, on orbit spaces of classical coregular representations
of the orthogonal groups $\OO_m$. We observe that the measures have singularities along non-principal strata of the orbit space if
and only if the number of copies of the defining representation of $\OO_m$ is equal to $m$.
\end{abstract}

\maketitle
\tableofcontents
\section{Introduction}\label{sec:intro}

When using the change of variables for multidimensional integrals, the coordinate functions must be algebraically independent.
In the presence of a symmetry group acting on the integration domain it is natural to use fundamental invariants for the change of variables.
This type of consideration can be found in textbooks on multivariable calculus in the context of spherical coordinates.
For example, letting $\OO_m = \OO_m(\R)$ denote the $m\times m$ real orthogonal group,
one might want to integrate a smooth $\OO_{3}$-invariant function $ f:D\rightarrow \R$ defined on an
$\OO_3$-invariant domain $D\subseteq \R^{3}$. By the Theorem of Gerald W. Schwarz on differentiable invariants \cite[Theorem~1]{diffinv},
see also \cite[Theorem~1]{MatherDifferentiableInvariants}, one can write
$f( x_{1}, x_{2}, x_{3}) = F(u(x_1,x_2,x_3))$
for some function $F\colon I\mathbb{\rightarrow R}$ defined on the interval $u(D)\subset[0,\infty)$ where $u(x_1,x_2,x_3) = x_1^2 + x_2^2 + x_3^2$.
One easily derives
\[
    \int_{D} f(x_1, x_2, x_3)\;  dx_1 dx_2 dx_3
        = 2\pi \int_I F(u)\sqrt{u}\; du.
\]
We refer to $2\pi\sqrt{u}\; du$ as the \emph{Hilbert measure} on $[0,\infty)$ corresponding to the \emph{Hilbert basis} $\{u=x_1^2 + x_2^2 + x_3^2\}$.
A more conventional way to write this is $4\pi r^{2}\; dr$, where $r = \sqrt{u}$ is the radius function.
The interval $[0,\infty)$ can be interpreted as the diffeomorphic image of the orbit space $\R^3/\OO_{3}$ via the \emph{Hilbert embedding}
$u\colon\R^{3}\rightarrow\R$. In general, when  $G$ is a compact Lie group acting linearly on
a domain $D\subseteq\R^m$, a Hilbert embedding $\boldsymbol{u}=(u_1,\dots,u_k):D\mapsto \mathbb{R}^k$ comprised of a complete set of polynomial invariants $u_1,\dots,u_k$,  induces a map which embeds the orbit space $D/G$
of a $G$-invariant subset $D\subseteq\R^{m}$ into Euclidean space (see \cite[p.~18]{Bierstone}).

In this paper we deduce similar coordinate change formulas for the classical representations $V_{k} :=\R^{mk}$ of the orthogonal groups $\OO_{m}$,
where $\OO_{m}$ acts diagonally on $(\bs{v}_{1} ,\bs{v}_{2} ,\ldots ,\bs{v}_{k}) \in V_{k}$ with
$\bs{v}_{i} \in \R^{m}$ for each $i\in [k] = \{1,\ldots ,k\}$ (see, for example, \cite{CHS}). The condition that the fundamental invariants are algebraically independent is referred to as \emph{coregularity} of the representation $V_{k}$. It is well-known that $V_{k}$ is a coregular representation of $\OO_m(\C)$
if and only if $k\leq m$, see \cite[Theorem~11.18]{SchwarzLifting}, and the same holds for $\OO_m=\OO_m(\R)$ by
\cite[Proposition~5.8(1)]{SchwarzLiftingHomotopies}.
The fundamental polynomial invariants of the classical representation $V_{k}$ are given by the mutual inner products of the coordinate vectors.
That is, the invariants are given by
${u_{i,j} = u_{j,i} = \langle \bs{v}_{i} ,\bs{v}_{j} \rangle =\sum_{\ell =1}^{m}{v}_{i,\ell}{v}_{j,\ell}}$, see \cite[Section~9.3]{PopovVinberg}.
The Hilbert embedding is then the vector-valued map
\[
    \bs{u} =\left( u_{i,j} \right)_{1\leq j\leq i \leq k} \colon V_{k}\rightarrow \R^{\binom{k+1}{2}}
\]
whose components are given by the invariants. The image $ X=\bs{u}( V_{k})$ of $ \bs{u}$ is a semialgebraic subset of $ \R^{\binom{k+1}{2}}$,
see \cite[Theorem~(0.10)]{SchwarzProcesi}. In the coregular case when $ k\leq m$, the Zariski closure of
$ X$ is $ \R^{\binom{k+1}{2}}$, i.e., $ X$ can be described by polynomial inequalities only.
By general principles, $ \bs{u}$ descends to a diffeomorphism of the differential spaces $ \left( V_{k} /\OO_{m} ,\ \mathcal{C}^{\infty }( V_{k})^{\OO_{m}}\right)$ and $ \left( X,\ \mathcal{C}^{\infty }( X)\right)$. In particular, it restricts to a diffeomorphism between the isotropy type strata of $ V_{k} /\OO_{m}$ and the minimal semialgebraic strata of $X$, see \cite[Theorem~2.5]{Bierstone}.

Abusing notation, we use also $\bs{u} = (u_{i,j})$ with $u_{i,j} = u_{j,i}$ for the coordinates on
$U:=\R^{\binom{k+1}{2}}$ and use the notation
$\R[U] = \R[ u_{i,j} : 1\leq j\leq i\leq k]$ for the coordinate ring on $U$. The restriction $\left(\langle \bs{v}_{i} ,\bs{v}_{j} \rangle\right)_{i,j\in[k]}$ of the symmetric matrix $G_k = ( u_{i,j})_{i,j\in [k]}$ to $\bs{u}(V_k)$ is often referred to as the
\textit{Gram matrix}. The \textit{Hilbert measure} $\lambda_{k,m}(\bs{u})\; d\bs{u}$ on $\bs{u}(V_{k})$ is uniquely determined
by the condition that
\[
    \int_{V_{k}} f(\bs{v}_{1}, \bs{v}_{2}, \ldots, \bs{v}_{k})\; d\bs{v}_{1} d\bs{v}_{2} \cdots d\bs{v}_{k}
        =   \int_{\bs{u}( V_{k})} F(\bs u) \lambda_{k,m}(\bs{u}) d\bs{u}
\]
for each $\OO_{m}$-invariant function with compact support $f\in \mathcal{C}_{c}^{\infty }(V_{k})^{\OO_{m}}$. Here
$d\bs{v}_{i}$, $i=1,\ldots ,k$ is the
Lebesgue measure for the $ i$th copy of $ \R^{m}$ and $ f(\bs{v}_{1} ,\bs{v}_{2} ,\ldots ,\bs{v}_{m}) =F(\bs{u})$ for some
$F\in \mathcal{C}_{c}^{\infty }(\R^{\binom{m+1}{2}})$.

Now recall that the volume of the $(m-1)$-dimensional unit sphere $S^{m-1}$ is
$\operatorname{Vol}(S^{m-1}) = \frac{2\pi^{m/2}}{\Gamma(m/2)}$ and that
\begin{align}
\label{eq:zhang}
 \prod_{j=1}^{m}\operatorname{Vol}(S^{j})
    =   \prod_{j=1}^{m} \frac{2\pi^{j/2}}{\Gamma(j/2)}
    =   \frac{2^{m}\sqrt{\pi}^{\binom{m+1}{2}}}{\prod_{j=1}^{m} \Gamma (j/2)}
    =   \operatorname{Vol}(\OO_{m}),
 \end{align}
see \cite[Theorem~2.24]{ZhangVolumes}. The purpose of this paper is to prove the following result.

\begin{theorem}
\label{thrm:Main}
For $k\leq m$ the Hilbert measure on $\bs{u}(V_k)\simeq V_k/\OO_m$ is
\begin{align}\begin{split}
\label{eq:Main}
    \lambda_{k,m}(\bs{u}) \; d\bs{u}
        &=      \frac{|G_k|^{(m-k-1)/2}}{2^{k}}
                    \prod_{j=1}^{k}\operatorname{Vol}(S^{m-j})\; d\bs{u}
        \\&=    \frac{1}{2^{k}}
                    \operatorname{Vol}(\OO_{m}/\OO_{m-k})\;
                    |G_k|^{(m-k-1)/2} \; d\bs{u},
\end{split}\end{align}
where $\operatorname{Vol}(\OO_{m} /\OO_{m-k})$ is the volume of the Stiefel manifold
$\OO_{m}/\OO_{m-k}$ \cite[Proposition~2.23]{ZhangVolumes} and $G_k = ( u_{i,j})_{i,j\in [k]}$ is the Gram matrix.
\end{theorem}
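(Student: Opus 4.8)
The plan is to establish \eqref{eq:Main} by induction on $k$, starting from the trivial case $k=0$ (where $V_0$ is a point, $G_0$ is the empty matrix with $|G_0|=1$, and both the product over $j$ and $\operatorname{Vol}(\OO_m/\OO_m)$ equal $1$) and peeling off one copy of $\R^m$ at each step. Writing $V_k=V_{k-1}\times\R^m$ and using Fubini, for $f=F\circ\bs u\in\mathcal{C}_c^\infty(V_k)^{\OO_m}$ we have
\[
    \int_{V_k} f\,d\bs v_1\cdots d\bs v_k
        =\int_{V_{k-1}}\left(\int_{\R^m}F(\bs u)\,d\bs v_k\right)d\bs v_1\cdots d\bs v_{k-1}.
\]
The locus in $V_{k-1}$ where $\bs v_1,\dots,\bs v_{k-1}$ are linearly dependent (equivalently $|G_{k-1}|=0$) has Lebesgue measure zero, so we may work on its complement and set $W=\operatorname{span}(\bs v_1,\dots,\bs v_{k-1})$, a $(k-1)$-plane in $\R^m$ with orthogonal complement $W^\perp\cong\R^{m-k+1}$.

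To evaluate the inner integral, decompose $\bs v_k=\bs w+\bs z$ with $\bs w\in W$, $\bs z\in W^\perp$, so $d\bs v_k=d\bs w\,d\bs z$. The invariants created by $\bs v_k$ are $u_{k,j}=\langle\bs w,\bs v_j\rangle$ for $j<k$ and $u_{k,k}=|\bs w|^2+|\bs z|^2$. The map $\bs w\mapsto(u_{k,1},\dots,u_{k,k-1})$ is a linear isomorphism from $W$ onto $\R^{k-1}$ whose matrix, in an orthonormal basis of $W$, is a $(k-1)\times(k-1)$ matrix $A$ with $AA^{T}=G_{k-1}$; hence its Jacobian is $\sqrt{|G_{k-1}|}$ and $d\bs w=|G_{k-1}|^{-1/2}\,du_{k,1}\cdots du_{k,k-1}$. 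Passing to polar coordinates on $W^\perp$ and setting $s:=|\bs z|^2$ yields
\[
    d\bs z=\frac{\operatorname{Vol}(S^{m-k})}{2}\,s^{(m-k-1)/2}\,ds .
\]

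The crucial point is the determinant identity linking $s$ to the Gram determinants. Since $\bs w$ is the orthogonal projection of $\bs v_k$ onto $W$, one has $|\bs w|^2=(\bs u_k')^{T}G_{k-1}^{-1}\bs u_k'$ with $\bs u_k'=(u_{k,1},\dots,u_{k,k-1})^{T}$, and the Schur complement formula for the block decomposition of $G_k$ with leading block $G_{k-1}$ gives $s=u_{k,k}-|\bs w|^2=|G_k|/|G_{k-1}|$. Thus, at fixed $\bs v_1,\dots,\bs v_{k-1}$ and fixed $u_{k,1},\dots,u_{k,k-1}$, we get $ds=du_{k,k}$, and the range of $\bs z$ over $W^\perp$ corresponds exactly to $u_{k,k}\in[\,(\bs u_k')^{T}G_{k-1}^{-1}\bs u_k',\infty)$, i.e., to $|G_k|\geq0$. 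Combining the three substitutions,
\[
    d\bs v_k=\frac{\operatorname{Vol}(S^{m-k})}{2}\,\frac{|G_k|^{(m-k-1)/2}}{|G_{k-1}|^{(m-k)/2}}\,du_{k,1}\cdots du_{k,k},
\]
and feeding this into the Fubini identity while applying the inductive hypothesis $\lambda_{k-1,m}(\bs u)=2^{-(k-1)}|G_{k-1}|^{(m-k)/2}\prod_{j=1}^{k-1}\operatorname{Vol}(S^{m-j})$ to the outer integral, the powers of $|G_{k-1}|$ cancel exactly and one obtains $\lambda_{k,m}(\bs u)=2^{-k}|G_k|^{(m-k-1)/2}\prod_{j=1}^{k}\operatorname{Vol}(S^{m-j})$, which is \eqref{eq:Main} after rewriting the product as $\operatorname{Vol}(\OO_m/\OO_{m-k})$. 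As a sanity check, when $k=m$ the factor $s^{(m-k-1)/2}=s^{-1/2}$ is singular as $s\to0$, which is the source of the singular behavior along non-principal strata noted in the abstract.

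The step requiring the most care is the bookkeeping of the region of integration: one must verify that under these coordinate changes $\bs u(V_k)=\{G_k\succeq0\}$ is realized precisely as the image of $\bs u(V_{k-1})\times\R^m$, fibered over $\bs u(V_{k-1})=\{G_{k-1}\succeq0\}$ with fiber coordinates $(u_{k,1},\dots,u_{k,k-1})\in\R^{k-1}$ and $u_{k,k}\in[\,(\bs u_k')^{T}G_{k-1}^{-1}\bs u_k',\infty)$, that the intermediate function of $\bs v_1,\dots,\bs v_{k-1}$ produced by the inner integral indeed factors through the invariants $\bs u(V_{k-1})$, and that the measure-zero degenerate locus and the locally integrable singularities along $\{|G_j|=0\}$ cause no trouble. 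An equivalent route, likely more familiar to readers acquainted with the Wishart distribution, is to factor the $m\times k$ matrix $A=[\bs v_1\,|\,\cdots\,|\,\bs v_k]$ as $A=QR$ with $Q$ in the Stiefel manifold $\OO_m/\OO_{m-k}$ and $R$ upper triangular with positive diagonal, apply the standard Jacobians of $A\mapsto(Q,R)$ and of the Cholesky map $R\mapsto R^{T}R=G_k$, and use $|G_k|=\bigl(\prod_i r_{ii}\bigr)^2$; this produces the density $2^{-k}|G_k|^{(m-k-1)/2}$ together with the factor $\operatorname{Vol}(\OO_m/\OO_{m-k})$ from integrating out $Q$. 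The inductive argument above is precisely this factorization carried out one column at a time, with the advantage of being self-contained.
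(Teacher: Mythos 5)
Your argument is correct, and it takes a genuinely different route from the paper. You induct on $k$ via Fubini, decompose the new vector $\bs v_k=\bs w+\bs z$ relative to $W=\operatorname{Span}\{\bs v_1,\dots,\bs v_{k-1}\}$, use the linear map $\bs w\mapsto(u_{k,1},\dots,u_{k,k-1})$ with Jacobian $\sqrt{|G_{k-1}|}$, polar coordinates on $W^\perp$, and the Schur-complement identity $u_{k,k}-(\bs u_k')^{T}G_{k-1}^{-1}\bs u_k'=|G_k|/|G_{k-1}|$; the per-step factor $\tfrac12\operatorname{Vol}(S^{m-k})\,|G_k|^{(m-k-1)/2}|G_{k-1}|^{-(m-k)/2}$ indeed telescopes against the inductive density, and your checks of the ranges and of the $k=m$ singularity $s^{-1/2}$ are right. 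The paper instead performs a single global change of variables: first rotating $(\bs v_1,\dots,\bs v_k)$ into a triangular fundamental domain by generalized Euler angles, which produces the radial Jacobian $\prod_i w_{i,i}^{m-i}$ and, after integrating the angles, the factor $\prod_{j=1}^{k}\operatorname{Vol}(S^{m-j})$ (with the reflection giving $\operatorname{Vol}(S^0)$ when $k=m$); and second a Cholesky-type substitution $w\mapsto u$ whose triangular Jacobian is $1/(2^k\sqrt{|G_1|\cdots|G_k|})$, combined with $w_{i,i}=\sqrt{|G_i|/|G_{i-1}|}$. Your inductive scheme is essentially the matrix $A=QR$ plus Cholesky factorization carried out one column at a time, as you note; it is more self-contained (no explicit angle parametrization or fundamental domain) and makes the origin of the $k=m$ singularity transparent, while the paper's version is more explicit and constructive, which pays off in the worked examples and in exhibiting the Stiefel-manifold volume directly. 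The two points you flag as needing care are genuine but minor: the inner integral is a smooth, compactly supported $\OO_m$-invariant function of $(\bs v_1,\dots,\bs v_{k-1})$, so Schwarz's theorem (already cited in the paper) lets you write it as $H\circ\bs u$ and apply the inductive hypothesis, with the identification of $H$ valid off the set $\{|G_{k-1}|=0\}$, which is Lebesgue-null in both $V_{k-1}$ and $\R^{\binom{k}{2}}$; and the singular factors $|G_j|^{-1/2}$ are locally integrable, so no issues arise there.
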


Note that if we consider $V_k$ as an $\SO_m$-module, then $V_k$ is coregular if and only if $k\leq m-1$, see \cite[Theorem~11.18]{SchwarzLifting}.
In this case, the $\SO_m$- and $\OO_m$-invariants coincide, see \cite[Section~9.3]{PopovVinberg}, and each point in $V_k$ is fixed by a reflection so
that $V_k/\SO_m = V_k/\OO_m$. Hence we have following.

\begin{corollary}
\label{cor:SO}
For $k\leq m-1$ the Hilbert measure on $\bs{u}(V_k)\simeq V_k/\SO_m$ coincides with $\lambda_{k,m}(\bs{u}) \; d\bs{u}$ in Equation~\eqref{eq:Main}.
\end{corollary}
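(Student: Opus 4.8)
The plan is to deduce the corollary directly from Theorem~\ref{thrm:Main} by arguing that, in the range $k \le m-1$, the $\SO_m$ problem is word-for-word the same problem as the $\OO_m$ problem: the same generating invariants, the same Hilbert embedding, the same orbit space, and hence the same defining integral identity for the Hilbert measure. Once these identifications are in place, the formula \eqref{eq:Main} transfers with no recomputation.

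First I would upgrade the remarks collected in the paragraph preceding the statement into honest equalities of the relevant objects. Since $\SO_m \subset \OO_m$, the inner products $u_{i,j} = \langle \bs{v}_i, \bs{v}_j \rangle$ are $\SO_m$-invariant, and by \cite[Section~9.3]{PopovVinberg} they already generate all of $\R[V_k]^{\SO_m}$ when $k \le m-1$, since the determinantal invariants appearing in the first fundamental theorem for $\SO_m$ require at least $m$ vectors and so do not contribute. Consequently the Hilbert embedding $\bs{u}$ and its image $X = \bs{u}(V_k)$ are literally the same whether $V_k$ is viewed as an $\OO_m$- or an $\SO_m$-module, and by \cite[Theorem~11.18]{SchwarzLifting} the representation is coregular for $\SO_m$ in exactly this range, so the $\SO_m$ Hilbert measure is well defined.

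Next I would verify $V_k/\SO_m = V_k/\OO_m$ through the reflection argument. Given $\bs{v} = (\bs{v}_1,\ldots,\bs{v}_k)$ with $k \le m-1$, the vectors span a subspace of dimension at most $m-1$, so there is a unit vector $\bs{w}$ orthogonal to all of them; the reflection $r(\bs{x}) = \bs{x} - 2\langle \bs{x}, \bs{w} \rangle \bs{w}$ lies in $\OO_m \setminus \SO_m$ and fixes $\bs{v}$. For any $g \in \OO_m \setminus \SO_m$ one then has $gr \in \SO_m$ and $(gr)\cdot \bs{v} = g \cdot (r \cdot \bs{v}) = g \cdot \bs{v}$, so the $\OO_m$-orbit of $\bs{v}$ coincides with its $\SO_m$-orbit. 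Because the orbits agree pointwise, a function on $V_k$ is $\SO_m$-invariant if and only if it is $\OO_m$-invariant; in particular $\mathcal{C}_c^\infty(V_k)^{\SO_m} = \mathcal{C}_c^\infty(V_k)^{\OO_m}$.

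Finally I would match these facts against the defining property of the Hilbert measure. The left-hand side of the characterizing integral identity is the Lebesgue integral over $V_k$, which makes no reference to the group, while the right-hand side is phrased through $\bs{u}$ and $X$, which we have shown are identical for the two groups; since the class of test functions is identical as well, the identity pinning down the $\SO_m$ Hilbert measure is exactly the one pinning down the $\OO_m$ Hilbert measure, and uniqueness forces the two to agree, whence Theorem~\ref{thrm:Main} supplies \eqref{eq:Main}. I expect the only genuine subtlety to be the role of the bound $k \le m-1$ rather than $k \le m$: it is needed both to guarantee coregularity of $V_k$ as an $\SO_m$-module — at $k = m$ the determinant $\det(\bs{v}_1,\ldots,\bs{v}_m)$ becomes a new generator satisfying $|G_m| = \det(\bs{v}_1,\ldots,\bs{v}_m)^2$, destroying coregularity — and to produce the orthogonal vector $\bs{w}$ driving the reflection argument, so I would take care to flag precisely where this hypothesis is consumed.
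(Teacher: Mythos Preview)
Your proposal is correct and follows exactly the approach the paper takes: the paper's proof is just the short paragraph immediately preceding the corollary, citing that for $k\le m-1$ the $\SO_m$- and $\OO_m$-invariants coincide, the representation is coregular, and every point is fixed by a reflection so that $V_k/\SO_m = V_k/\OO_m$. You have simply written out these observations in full detail, including the explicit reflection argument and the explanation of why the determinantal invariant is absent for $k\le m-1$, so your write-up is a faithful expansion of the paper's sketch rather than a different method.
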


It is interesting to note that $\lambda_{k,m}$ is a smooth function on $\R^{\binom{k+1}{2}}$ in every case
except $k = m$, in which case it has singularities at points where $|G_k| = 0$. In terms of the stratification of $V_k$ by $\OO_m$-orbit types, the principal stratum consists of points such that $\{\bs{v}_1,\ldots,\bs{v}_k\}$ is linearly independent, i.e., such that $|G_k| \neq 0$. Hence, the singularities occur on lower-dimensional strata. Note that these lower-dimensional strata constitute a set of measure zero in $V_k$ so that the integral of a smooth invariant function, and hence the measure $\lambda_{k,m}\; d\bs{u}$, is determined by the principal stratum.

Using $\OO_{m}$-invariant bump functions localized near the orbits, see \cite[Theorem~4.3.1]{Palais} and \cite[Section~4.2]{Michor},
one deduces the following well-known result.

\begin{corollary}[{\cite[Example~0.8]{SchwarzProcesi}}]
\label{cor:HilbImage}
If $k\leq m$, then $\boldsymbol{u}(V_k)$ is the subset of $\R^{\binom{k+1}{2}}$ where $G_k$ is positive semidefinite.
\end{corollary}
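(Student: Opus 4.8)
The plan is to verify the two set inclusions separately; the easy one is that every point of $\bs{u}(V_k)$ satisfies $G_k\succeq 0$. Indeed, for any $(\bs{v}_1,\dots,\bs{v}_k)\in V_k$ and any $\bs{x}=(x_1,\dots,x_k)\in\R^k$ one has
\[
    \bs{x}^{T}\,G_k\bigl(\bs{u}(\bs{v}_1,\dots,\bs{v}_k)\bigr)\,\bs{x}
        = \sum_{i,j=1}^{k} x_i x_j\langle\bs{v}_i,\bs{v}_j\rangle
        = \Bigl\|\sum_{i=1}^{k} x_i\bs{v}_i\Bigr\|^{2}\ge 0,
\]
so a Gram matrix is always positive semidefinite and $\bs{u}(V_k)\subseteq\{\bs{u}\in\R^{\binom{k+1}{2}}:G_k\succeq 0\}$.

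For the reverse inclusion I would invoke the elementary linear-algebra fact that a symmetric positive semidefinite $k\times k$ matrix $G$ factors as $G=A^{T}A$ for a real $k\times k$ matrix $A$ — for instance via the spectral decomposition $G=QDQ^{T}$ with $Q$ orthogonal and $D=\operatorname{diag}(d_1,\dots,d_k)$ with $d_i\ge 0$, and $A=D^{1/2}Q^{T}$. The columns $\bs{a}_1,\dots,\bs{a}_k\in\R^{k}$ of $A$ satisfy $\langle\bs{a}_i,\bs{a}_j\rangle=G_{i,j}$; since $k\le m$, composing with the standard isometric inclusion $\R^{k}\hookrightarrow\R^{m}$ produces $\bs{v}_1,\dots,\bs{v}_k\in\R^{m}$ with the same pairwise inner products, so that $\bs{u}(\bs{v}_1,\dots,\bs{v}_k)$ is exactly the point of $\R^{\binom{k+1}{2}}$ whose Gram matrix is $G$. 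This shows $\{\bs{u}:G_k(\bs{u})\succeq 0\}\subseteq\bs{u}(V_k)$, and the hypothesis $k\le m$ is used precisely here, in the embedding step. This step — really just the statement that positive semidefinite matrices are Gram matrices, with enough room to embed $\R^k$ in $\R^m$ — is the only one carrying any content, and it presents no genuine obstacle.

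An alternative, more conceptual route, closer in spirit to the slice-theoretic references, uses only the positive \emph{definite} case of the factorization. Since $\sum_i u_{i,i}=\sum_i\|\bs{v}_i\|^{2}$, the Hilbert map $\bs{u}$ is proper, hence its image is closed in $\R^{\binom{k+1}{2}}$; the $\OO_m$-invariant bump functions of \cite[Theorem~4.3.1]{Palais} and \cite[Section~4.2]{Michor} are what guarantee in general that $\bs{u}$ descends to a homeomorphism of $V_k/\OO_m$ onto this closed image. As $\{G_k\succ 0\}\subseteq\bs{u}(V_k)$ by the factorization (taking $A$ invertible) and $\{G_k\succ 0\}$ is dense in the closed convex cone $\{G_k\succeq 0\}$, one concludes $\{G_k\succeq 0\}=\overline{\{G_k\succ 0\}}\subseteq\bs{u}(V_k)$, which together with the first inclusion yields equality. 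In either presentation, the points worth flagging explicitly are where $k\le m$ is used and where compactness of $\OO_m$ enters (through properness/closedness of the image).
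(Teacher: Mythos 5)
Your proof is correct, but it reaches the forward inclusion by a genuinely different route from the paper. You establish that every point of $\bs{u}(V_k)$ has positive semidefinite Gram matrix directly from the identity $\bs{x}^{T}G_k\bs{x}=\bigl\|\sum_{i}x_i\bs{v}_i\bigr\|^{2}\geq 0$, a self-contained linear-algebra fact that never uses Theorem~\ref{thrm:Main}. The paper instead deliberately derives this half \emph{from} the theorem: $\OO_m$-invariant bump functions supported near principal orbits force the density $|G_k|^{(m-k-1)/2}$ in Equation~\eqref{eq:Main} to be nonnegative on the principal stratum (which gives information only when $k\neq m-1$), and an induction on $m$, moving vectors into coordinate subspaces by elements of $\OO_m$, upgrades this to positive semidefiniteness — consistent with the statement being labelled a corollary of the theorem. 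Your argument is shorter, independent of the measure computation, and sidesteps the delicate point that nonnegativity of (leading) principal minors of the displayed kind does not by itself characterize positive semidefiniteness, which the paper's induction has to work around. The reverse inclusion is essentially identical in both treatments: factor a positive semidefinite $G$ as $A^{T}A$ (Cholesky in the paper, spectral in your write-up) and pad the columns of $A$ with zeros via the inclusion $\R^{k}\hookrightarrow\R^{m}$; this is where $k\leq m$ enters, exactly as you flag. Your alternative closure argument (properness of $\bs{u}$ from $\sum_i u_{i,i}=\sum_i\|\bs{v}_i\|^{2}$, plus density of the positive definite matrices in the semidefinite cone) is also sound, though properness alone already gives closedness of the image, so the appeal to the bump-function machinery of Palais and Michor is not needed for that step.
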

\begin{proof}
Let us use induction on $m$ to show that the matrix $\left(\langle \bs{v}_{i} ,\bs{v}_{j} \rangle\right)_{i,j\in[k]}$  is positive semidefinite.
The statement is obvious for $m=1$. For $\ell< m$ we view $\mathbb{R}^{\ell}$ as the subspace
\[
    \left\{\boldsymbol{v} =( v_{1} ,\dotsc ,v_{m}) =\mathbb{R}^{m} \mid  v_{\mu} = 0\text{ for }\mu  >\ell\right\} \subset \mathbb{R}^{m} .
\]
From the theorem we can show, using invariant bump functions for points with principal isotropy type, that when $ k\leq m$ and $ k\neq m-1$ we have $ |G_k|_{|(V_{k})_{\operatorname{princ}}} \geq 0$. But
\begin{align*}
    (V_{k})_{\operatorname{princ}}
        &=  \{(\boldsymbol{v}_{1} ,\dotsc ,\boldsymbol{v}_{k}) \in V_{k} \mid \det( \langle \boldsymbol{v}_{i} ,\boldsymbol{v}_{j} \rangle )_{i,j\in [ k]}
            \ \neq 0\}  \\
        &=  \left\{(\boldsymbol{v}_{1} ,\dotsc ,\boldsymbol{v}_{k}) \in V_{k}  \mid
            \dim\operatorname{Span}\{\boldsymbol{v}_{1},\dotsc,\boldsymbol{v}_{k}\} =k\right\}.
\end{align*}
If $\dim\operatorname{Span}\mathbb{R}\{\boldsymbol{v}_{1} ,\dotsc ,\boldsymbol{v}_{m-1}\} =m-1$ we can find an $ A\in \operatorname{O}_{m}$ such that
$ ( A\boldsymbol{v}_{1} ,\dotsc ,A\boldsymbol{v}_{m-1}) \in \left(\mathbb{R}^{k( m-1)}\right)_{\operatorname{princ}}$. Hence we can show using the theorem for $ \mathbb{R}^{k( m-1)}$ that the upper left principal $ ( m-1) \times ( m-1)$-minor of $ ( \langle \boldsymbol{v}_{i} ,\boldsymbol{v}_{j} \rangle )_{i,j\in [ k]}$ is $ \geq 0$. So we can drop the assumption $ k\neq m-1$ above.

Now for $k\leq m$ we have $(\boldsymbol{v}_{1} ,\dotsc ,\boldsymbol{v}_{k}) \in V_{k} \backslash (V_{k})_{\operatorname{princ}}$ if and only if
$\det( \langle \boldsymbol{v}_{i} ,\boldsymbol{v}_{j} \rangle )_{i,j\in [ k]} =0$, or equivalently, if
$\ell=\dim\operatorname{Span}\{\boldsymbol{v}_{1} ,\dotsc ,\boldsymbol{v}_{k}\} < k$.
If $\ell < k$ we can find an $A\in \operatorname{O}_{m}$ such that
$(A\boldsymbol{v}_{1} ,\dotsc ,A\boldsymbol{v}_{k}) \in \left(\mathbb{R}^{k\ell}\right)_{\operatorname{princ}}$ and hence by the induction
hypothesis the upper left $\ell\times\ell$-submatrix of $(\langle \boldsymbol{v}_{i} ,\boldsymbol{v}_{j} \rangle )_{i,j\in [ k]}$ is positive semidefinite.

To argue that $\boldsymbol{u}$ reaches each point where $G_{k}$ is positive semidefinite, take a Cholesky decomposition $R^{\top } R$ of such a point $(u_{i,j})_{i,j\in [k]}$. Then $$(\boldsymbol{v}_{1} ,\dotsc ,\boldsymbol{v}_{k}) =\begin{pmatrix}
\boldsymbol{1}_{k}\\
\boldsymbol{0}_{m-k}
\end{pmatrix} R$$ is in the preimage.
\end{proof}

Let us elaborate on why Hilbert measures on orbit spaces may be of scientific interest. In Feynman's approach to quantum theory, one studies path integrals
\[
    I_\hbar=\int e^{\sqrt{-1} S/\hbar }\mathcal{D} \varphi,
\]
where $S$ is a local action functional (i.e., a space-time integral of a function on the infinite jet space in the fields
$\varphi_{\alpha} =\varphi_{\alpha }\left( x^{1} ,\dotsc ,x^{n}\right)$, $\alpha =1,\dotsc ,k$, depending on the spacetime coordinates $x^{1} ,\dotsc ,x^{n}$).
To date, only in low spacetime dimensions has it been possible to interpret the formal expression $\mathcal{D} \varphi$ as a well-defined measure.
In gauge theories, the actions $S$ and $\mathcal{D} \varphi$ are invariant with respect to the action of the infinite-dimensional gauge group $\mathcal{G}$.
Then the formally ill-defined integral $I_\hbar$ can be regularized by passing to an integral over the orbit space, that is, by integrating over the
coordinates perpendicular to the orbits and dividing out the (typically infinite) volume of the group. More specifically, it can be shown
\cite[Section 6]{Reshetikhin} that when the spacetime is $0$-dimensional and the $\mathcal{G}$-action has trivial principal isotropy type and admits
a cross-section (also known as gauge fixing), the \emph{Faddeev-Popov trick} \cite{Faddeev} provides a means to calculate the asymptotic expansion in the
Planck constant $\hbar$ of the regularized $I_\hbar$. The success of the Faddeev-Popov trick suggests that it might be valid in much greater generality.

The construction of the Hilbert measure that we carry out here for the coregular classical representations of the orthogonal groups can, in principle,
be performed in a similar and routine manner for other coregular representations of compact Lie groups. Coregular representations in
turn have been classified by Gerald W. Schwarz \cite{SchwarzCoregular}. In fact, coregular actions admit generically local cross-sections so that the invariants can
be complemented to local coordinate systems, and one has simply to work out the change of variables. In the non-coregular case things are more challenging
and we are examining several ideas. Besides, some of the authors have recently found symplectic forms $\omega$ on orbifolds of unitary representations
$V$ of finite groups \cite{HOS}. It seems to be a natural and important question whether their Liouville forms $\omega^{\cup \dim_{\mathbb C} V}$ can be related to the Hilbert measure. Similar questions arise for the singular symplectic quotients considered by Sjamaar-Lerman \cite{SL}.

\section*{Acknowledgements}

This paper developed from L.W.'s work as part of a Summer Research Fellowship and Jane Hyde Scott Award as well as continued research in
the Rhodes College Department of Mathematics and Statistics, and the authors gratefully acknowledge the support of the department and
college for these activities.
C.S. was supported by an AMS-Simons Research Enhancement Grant for PUI Faculty, and
L.W. was supported by the National Science Foundation under Grant No. 2015553.

\section{Generalized Euler Angles}
\label{sec:EA}

In this section, we recall the parametrization of $\operatorname{SO}_{m}$ by \textit{generalized Euler angles} given in \cite{HofRafRuedEulerAngles}.
This parametrization will be used implicitly in Section~\ref{subsec:CompCOB1}.
According to this parametrization, any $\bs{v} \in \R^{m}$ with $\| \bs{v} \| = 1$ can be written as
\begin{align}
\label{eq:sph}
 \bs{v} =\sin \theta_{1}\bs{e}_{1} +\cos \theta_{1}(\sin \theta_{2}\bs{e}_{2} +\cos \theta_{2}( \dots(\sin \theta_{m-1}\bs{e}_{m-1}
    + \cos \theta_{m-1}\bs{e}_{m}) \ldots ))
 \end{align}
for
\[
    \theta_{j} \in \left[ -\frac{\pi }{2} ,\frac{\pi }{2}\right],
    \quad
    j=1,\ldots ,m-2\ \text{ and } \theta_{m-1} \in [ -\pi ,\pi ].
\]
Here $\bs{e}_{j} \in \R^{m}$ is the $j$th canonical basis vector, i.e., its $k$th component is $\delta_{j,k}= 0$ if $j\neq k$ and $1$ if $j = k$.
The angle $\theta_{m-1}$ is referred to as the \textit{azimuthal angle }and the $\theta_{1} ,\ldots ,\theta_{m-2}$ as the \textit{generalized polar angles}. We write $\bs{\theta } =( \theta_{1} ,\ldots ,\theta_{m-1})$ for the $(m-1)$-tuple of Euler angles, which are generically uniquely determined by
$\bs{v}$.
\begin{example}
If $m=3$ we have $\bs{v} =\sin \theta_{1}\bs{e}_{1} +\cos \theta_{1}(\sin \theta_{2}\bs{e}_{2} +\cos \theta_{2}\bs{e}_{3})$.
This is can be understood in terms of the conventional spherical coordinates $(\phi,\theta)$ for the unit sphere $S^2\subset \mathbb R^3$ by setting
$\phi =\theta_{2}$ and $\theta = \pi /2 - \theta_{1}$ and applying the coordinate permutation $(13)$.
\end{example}
The vectors in the iterated brackets above, denoted $\bs{f}_k$, can be defined by recursive rotations, starting with $\bs{f}_{m} =\bs{e}_{m}$ and setting
\[
    \bs{f}_{k-1} = \sin \theta_{k-1}\bs{e}_{k-1} + \cos\theta_{k-1}\bs{f}_{k}, \quad  k=m,\ldots ,2,
\]
so that $\bs{v} =\bs{f}_{1}$. In fact, by construction $\langle \bs e_{k-1} ,\bs{f}_{k} \rangle =0$ for $k=2,\ldots ,m$,
$\theta_{k-1} =\measuredangle (\bs{f}_{k-1} ,\bs{f}_{k})$ for $k=1,\ldots, m-1$, and
$\| \bs{f}_{k} \| = 1$ for $k=1,\ldots ,m$. We can write
$\bs{f}_{k} = \sum_{j=1}^{k}\tan( \theta_{j})\left(\prod_{l=k}^{m}\cos(\theta_{l})\right)\bs{e}_{j}$.

\begin{proposition}{\cite{HofRafRuedEulerAngles}}
The vectors
$\bs{a}_{j}(\bs{\theta}) :=\left(\prod_{\ell=1}^{j-1}\cos( \theta_{\ell})\right)^{-1}\frac{\partial \bs{v}}
    {\partial \theta_{j}} =\cos( \theta_{j})\bs{e}_{j} -\sin( \theta_{j})\bs{f}_{j+1}$ with $j=1,\ldots ,m-1$
and $\bs{a}_{m} :=\bs{v}$ form an orthonormal basis of $\R^{m}$.
For $\bs{\theta}=(0,\dots,0)$ we have $\bs{a}_{j}(0,\dots,0)=\bs{e}_{j}$.
\end{proposition}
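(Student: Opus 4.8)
The plan is to show directly that $\bs a_1,\dots,\bs a_{m-1},\bs a_m$ are pairwise orthogonal unit vectors; as these are $m$ vectors in $\R^m$, they then automatically form an orthonormal basis. The only structural inputs I will use are the three facts already recorded above: the recursion $\bs f_{k-1}=\sin\theta_{k-1}\bs e_{k-1}+\cos\theta_{k-1}\bs f_k$, the orthogonality $\langle\bs e_{k-1},\bs f_k\rangle=0$, and the normalization $\|\bs f_k\|=1$.

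First I would establish a span lemma by downward induction on the recursion: $\bs f_k\in\operatorname{span}\{\bs e_k,\bs e_{k+1},\dots,\bs e_m\}$ for every $k$, and more precisely $\bs f_k\in\operatorname{span}\{\bs e_k,\dots,\bs e_{j-1},\bs f_j\}$ whenever $k\le j$ (the case $k=j$ being trivial, the inductive step adjoining one more $\bs e$ via the recursion). In particular $\bs a_j=\cos\theta_j\bs e_j-\sin\theta_j\bs f_{j+1}$ lies in $\operatorname{span}\{\bs e_j,\dots,\bs e_m\}$, hence $\langle\bs e_l,\bs a_j\rangle=0$ for all $l<j$. Next I would record the single identity $\langle\bs f_j,\bs a_j\rangle=0$, which is the one-line computation $\langle\sin\theta_j\bs e_j+\cos\theta_j\bs f_{j+1},\ \cos\theta_j\bs e_j-\sin\theta_j\bs f_{j+1}\rangle=\sin\theta_j\cos\theta_j-\cos\theta_j\sin\theta_j=0$, using $\langle\bs e_j,\bs f_{j+1}\rangle=0$ and $\|\bs e_j\|=\|\bs f_{j+1}\|=1$ (concretely, $(\bs f_j,\bs a_j)$ is obtained from the orthonormal pair $(\bs e_j,\bs f_{j+1})$ by an orthogonal transformation of the plane they span). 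Combining the two: for $1\le k\le j\le m-1$, write $\bs f_k=w+c\,\bs f_j$ with $w$ a linear combination of $\bs e_k,\dots,\bs e_{j-1}$ and $c$ a scalar; then $\langle\bs f_k,\bs a_j\rangle=\langle w,\bs a_j\rangle+c\langle\bs f_j,\bs a_j\rangle=0$. Call this the key lemma: $\langle\bs f_k,\bs a_j\rangle=0$ for all $1\le k\le j\le m-1$.

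With the key lemma in hand the orthogonality is immediate. Since $\bs a_m=\bs v=\bs f_1$, the lemma with $k=1$ gives $\langle\bs a_j,\bs a_m\rangle=0$ for $j\le m-1$ (alternatively, $\bs a_j$ is a scalar multiple of $\partial\bs v/\partial\theta_j$ and $\langle\bs v,\partial\bs v/\partial\theta_j\rangle=\tfrac12\,\partial\|\bs v\|^2/\partial\theta_j=0$). For $1\le i<j\le m-1$, expanding $\bs a_i=\cos\theta_i\bs e_i-\sin\theta_i\bs f_{i+1}$ gives $\langle\bs a_i,\bs a_j\rangle=\cos\theta_i\langle\bs e_i,\bs a_j\rangle-\sin\theta_i\langle\bs f_{i+1},\bs a_j\rangle$, where the first inner product vanishes by the span lemma ($i<j$) and the second by the key lemma ($i+1\le j\le m-1$). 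The norms are $\|\bs a_j\|^2=\cos^2\theta_j+\sin^2\theta_j=1$ for $j\le m-1$ (again using $\langle\bs e_j,\bs f_{j+1}\rangle=0$ and $\|\bs f_{j+1}\|=1$) and $\|\bs a_m\|=\|\bs v\|=1$. Finally, evaluating at $\bs\theta=(0,\dots,0)$ the recursion collapses to $\bs f_k=\bs e_m$ for all $k$, so $\bs a_m=\bs f_1=\bs e_m$ and $\bs a_j(0,\dots,0)=\bs e_j-0\cdot\bs f_{j+1}=\bs e_j$ for $j\le m-1$.

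I do not expect a genuine obstacle in this argument; the only delicate point is the bookkeeping of the nested spans $\operatorname{span}\{\bs e_k,\dots,\bs e_{j-1},\bs f_j\}$ and the matching of index ranges ($k\le j$, $i<j$, $j\le m-1$), which is exactly what lets the single identity $\langle\bs f_j,\bs a_j\rangle=0$ propagate into the full collection of orthogonality relations.
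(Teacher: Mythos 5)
Your argument is correct, but it takes a genuinely different route from the paper's. The paper works analytically: it differentiates Equation~\eqref{eq:sph} to identify $\bs{a}_j$ as the normalized tangent vector $\left(\prod_{\ell<j}\cos\theta_\ell\right)^{-1}\partial\bs{v}/\partial\theta_j$, gets $\bs{a}_j\perp\bs{a}_m$ from $\partial\|\bs{v}\|^2/\partial\theta_j=0$, and gets $\bs{a}_i\perp\bs{a}_j$ from the mixed-partial identity $\partial^2\bs{v}/\partial\theta_i\partial\theta_j=-\tan\theta_i\,\partial\bs{v}/\partial\theta_j$ together with $\langle\bs{v},\partial\bs{v}/\partial\theta_j\rangle=0$. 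You instead argue purely algebraically from the recursion $\bs{f}_{k-1}=\sin\theta_{k-1}\bs{e}_{k-1}+\cos\theta_{k-1}\bs{f}_k$: the nested-span lemma plus the single identity $\langle\bs{f}_j,\bs{a}_j\rangle=0$ propagate to $\langle\bs{f}_k,\bs{a}_j\rangle=0$ for $k\le j$, from which all orthogonality relations follow. What each buys: the paper's proof is shorter and exploits the analytic origin of the $\bs{a}_j$ with no index bookkeeping, while yours avoids calculus entirely, remains valid at angles where some $\cos\theta_\ell=0$ (where the prefactor $\left(\prod_{\ell<j}\cos\theta_\ell\right)^{-1}$ degenerates), and makes transparent the triangular structure of the matrix $A_{\bs\theta}$ recorded after the proposition. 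One small omission: the proposition asserts the identity $\left(\prod_{\ell<j}\cos\theta_\ell\right)^{-1}\partial\bs{v}/\partial\theta_j=\cos\theta_j\bs{e}_j-\sin\theta_j\bs{f}_{j+1}$, which you take as given rather than verify; this is exactly what the paper's first displayed computation establishes, and in your framework it is one line, since $\bs{v}=\sum_{\ell<j}\left(\prod_{r<\ell}\cos\theta_r\right)\sin\theta_\ell\,\bs{e}_\ell+\left(\prod_{\ell<j}\cos\theta_\ell\right)\bs{f}_j$ with $\bs{f}_j=\sin\theta_j\bs{e}_j+\cos\theta_j\bs{f}_{j+1}$ and $\bs{f}_{j+1}$ independent of $\theta_j$ — you should add that sentence.
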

\begin{proof} From \eqref{eq:sph} it follows that
\[
    \frac{\partial \bs{v}}{\partial \theta_{j}} =\left(\prod_{\ell=1}^{j-1}\cos( \theta_{\ell})\right)\frac{\partial (\sin \theta_{j} \bs e_{j}
    + \cos( \theta_{j})\bs{f}_{j+1})}{\partial \theta_{j}} =\left(\prod_{\ell=1}^{j-1}\cos( \theta_{\ell})\right)(\cos \theta_{j}\bs e_{j}
    - \sin( \theta_{j})\bs{f}_{j+1}),
\]
so that $\bs{a}_{j} =\cos \theta_{j}\bs e_{j} - \sin(\theta_{j})\bs{f}_{j+1}$ is of magnitude $1$. Moreover, for $\ell< j$ we have
\begin{align}
\label{eq:par^2}
    \frac{\partial^{2}\bs{v}}{\partial \theta_{i} \partial \theta_{j}} =\frac{\partial \left(\prod_{\ell=1}^{j-1}\cos( \theta_{\ell})\right)}{\partial \theta_{i}}\frac{\partial (\sin \theta_{j}\bs e_{j} + \cos( \theta_{j})\bs{e}_{j+1})}{\partial \theta_{j}}
        =   -\tan \theta_{i}\frac{\partial \bs{v}}{\partial \theta_{j}}.
\end{align}
Since $\| \bs{v} \| =1$ \ we have $\langle \bs{v} ,\partial \bs{v} /\partial \theta_{j} \rangle =0$, so that $\bs{a}_{j} \perp \bs{a}_{m}$.
But
\[
    \left\langle \frac{\partial \bs{v}}{\partial \theta_{i}} ,\frac{\partial \bs{v}}{\partial \theta_{j}}\right\rangle
    =   \frac{\partial }{\partial \theta_{i}}\underbrace{\left\langle \bs{v} ,\frac{\partial \bs{v}}{\partial \theta_{j}}\right\rangle}_{=0}
        - \underbrace{\left\langle\bs{v} ,\frac{\partial^{2}\bs{v}}{\partial \theta_{i} \partial \theta_{j}}\right\rangle}_{=0\text{ by }\eqref{eq:par^2}}
    =   0,
\]
so that $\bs{a}_{i} \perp \bs{a}_{j}$ for $i,j=1,\ldots,m-1$.
\end{proof}

Let $A_{\bs{\theta }} :=( a_{i,j}(\bs{\theta }))_{i,j\in [m]} \in \operatorname{SO}_{m}$ be the matrix whose columns are $\bs{a}_{1} (\bs{\theta}),\ldots ,\bs{a}_{m}(\bs{\theta})$. It turns out that
\[a_{i,j}(\bs{\theta }) =\begin{cases}
\cos \theta_{i} & \text{if } i=j\leq m-1,\\
\tan \theta_{i}\prod_{\ell=1}^{i}\cos \theta_{\ell} & \text{if } i\leq m= j,\\
-\tan \theta_{i}\tan \theta_{j}\prod_{\ell=j}^{i}\cos \theta_{\ell} & \text{if } i > j,\\
0 & \text{else},
\end{cases}
\]
with the convention that $\theta_m=\pi/2$ and $\tan(\theta_m)\cos(\theta_m)=1$.

\begin{theorem}{\cite{HofRafRuedEulerAngles}}
Every matrix in $\SO_{m}$ can be written as an $A_{\bs{\theta}}$ for some generically unique $\bs{\theta} =(\theta_{1} ,\ldots ,\theta_{m-1})$.
\end{theorem}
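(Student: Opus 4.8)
The plan is to reduce the assertion to the generalized spherical parametrization \eqref{eq:sph} of the unit sphere $S^{m-1}\subset\R^m$ and then to peel off one column at a time. First I would record why \eqref{eq:sph} parametrizes $S^{m-1}$: given a unit vector $\bs v=(v_1,\dots,v_m)$, set $\theta_1:=\arcsin v_1\in[-\tfrac\pi2,\tfrac\pi2]$, so that $\cos\theta_1=\lVert(v_2,\dots,v_m)\rVert\ge0$; if $\cos\theta_1\ne0$ the rescaled tail $(v_2,\dots,v_m)/\cos\theta_1$ is again a unit vector, to which the same recipe applies, producing $\theta_2,\dots,\theta_{m-2}$, and at the final stage $(v_{m-1},v_m)$ is an arbitrary unit vector of $\R^2$, parametrized by $\theta_{m-1}\in[-\pi,\pi]$. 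These $\theta_j$ are uniquely determined as long as no intermediate rescaling is forced to the boundary, i.e. off the Lebesgue-null locus where some partial tail of $\bs v$ vanishes or $\theta_{m-1}$ is forced to $\pm\pi$; this is what ``generically unique'' means. By the Proposition above, $\bs a_1(\bs\theta),\dots,\bs a_m(\bs\theta)$ form an orthonormal basis, so $A_{\bs\theta}\in\OO_m$; since the entries $a_{i,j}(\bs\theta)$ are, after the evident cancellations, continuous products of sines and cosines on the connected parameter box, and $A_{\bs 0}=I$, connectedness forces $A_{\bs\theta}\in\SO_m$ throughout, so in particular $\det A_{\bs\theta}=1$ is automatic.

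Next I would use that the last column of $A_{\bs\theta}$ is $\bs a_m(\bs\theta)=\bs v(\bs\theta)$, so that $A_{\bs\theta}$ is a section, over the generic part of $S^{m-1}$, of the principal bundle $\pi\colon\SO_m\to S^{m-1}$, $A\mapsto A\bs e_m$, whose fibre is the copy of $\SO_{m-1}$ fixing $\bs e_m$ and acting on $\operatorname{span}(\bs e_1,\dots,\bs e_{m-1})$. Given $A\in\SO_m$, choose (generically uniquely, by the previous step) the angles $\bs\theta$ with $\bs v(\bs\theta)=A\bs e_m$; then $B:=A_{\bs\theta}^{-1}A$ fixes $\bs e_m$, hence lies in that $\SO_{m-1}$. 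Applying the same construction to $B$ inside $\SO_{m-1}$, and iterating down to $\SO_2$, writes an arbitrary $A\in\SO_m$ as the product of block matrices $\operatorname{diag}(A_{\bs\theta^{(j)}},1,\dots,1)$ with $\bs\theta^{(j)}$ a $(j-1)$-tuple, $j=m,m-1,\dots,2$; the total number of angles is $\sum_{j=2}^{m}(j-1)=\binom m2=\dim\SO_m$, and each tuple is generically unique, which yields the asserted (generically unique) Euler-angle decomposition. For $m\le2$ there is no iteration and the statement holds verbatim.

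The main obstacle is the uniqueness clause. What has to be shown is that $\bs\theta\mapsto\bs v(\bs\theta)$ restricts to a diffeomorphism from the interior $\prod_{j=1}^{m-2}(-\tfrac\pi2,\tfrac\pi2)\times(-\pi,\pi)$ onto the complement of a finite union of lower-dimensional coordinate subspheres of $S^{m-1}$; concretely, its Jacobian is a monomial in $\cos\theta_1,\dots,\cos\theta_{m-2}$ with positive exponents, hence nonvanishing on that interior, and one then transports the resulting ``bad locus'' up to $\SO_m$ through the bundle $\pi$ at each stage of the iteration. The bad locus so obtained is Lebesgue-null, which is precisely what is needed when this parametrization is invoked for the change-of-variables computations in Section~\ref{subsec:CompCOB1}. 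The remainder --- verifying the closed form of $a_{i,j}(\bs\theta)$, the compatibility of the nested block structure, and, as noted, the sign of the determinant --- is routine.
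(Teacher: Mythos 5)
First, a point of comparison: the paper does not prove this theorem at all --- it is quoted from \cite{HofRafRuedEulerAngles} and used only implicitly in Section~\ref{subsec:CompCOB1} --- so there is no internal argument to measure your write-up against; your proposal has to stand on its own.

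On its own terms, your argument proves the right theorem but not the stated one, and you should say so explicitly. As written, the statement asserts that a single matrix $A_{\bs\theta}$ with $\bs\theta\in\R^{m-1}$ reaches every element of $\SO_m$; this is impossible for $m\geq 3$, since $\bs\theta\mapsto A_{\bs\theta}$ has only $m-1$ parameters while $\dim\SO_m=\binom{m}{2}$ --- indeed, as your own bundle picture shows, the image of this map is just an $(m-1)$-dimensional section of $\pi\colon\SO_m\to S^{m-1}$, $A\mapsto A\bs e_m$, so for $m=3$ it already misses most rotations. What you actually prove, by splitting off the last column via \eqref{eq:sph} and iterating through the chain $\SO_m\supset\SO_{m-1}\supset\cdots\supset\SO_2$, is the factorization $A=A_{\bs\theta^{(m)}}\operatorname{diag}(A_{\bs\theta^{(m-1)}},1)\cdots\operatorname{diag}(A_{\bs\theta^{(2)}},1,\dots,1)$ with $\binom{m}{2}$ generically unique angles. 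That is the actual content of \cite{HofRafRuedEulerAngles}, and it is what the sequence of rotations $R^{(j)}_{\theta}$ in Section~\ref{subsec:CompCOB1} implicitly relies on, so your reading is the correct repair of the statement; but your closing claim that the iteration ``yields the asserted decomposition'' glosses over the discrepancy. Either restate the theorem as a product decomposition before proving it, or flag that the literal wording cannot hold. Within the corrected reading your steps are sound: orthonormality of the columns together with continuity of $\bs\theta\mapsto\det A_{\bs\theta}$ on the connected parameter box and $A_{\bs 0}=I$ gives $\det A_{\bs\theta}=1$; the fibre step $B=A_{\bs\theta}^{-1}A$ with $B\bs e_m=\bs e_m$ is correct; and generic uniqueness via the nonvanishing Jacobian of \eqref{eq:sph} on the open box, with the null bad locus transported through $\pi$ at each stage, is the standard and adequate treatment, and is exactly the genericity needed for the change-of-variables computations later in the paper.
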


\section{Computation of the measure}
\label{sec:MeasureComp}

In this section, we compute the measure $\lambda_{k,m}(\bs{u}) \; d\bs{u}$ and hence prove Theorem~\ref{thrm:Main}.
We divide the proof into two changes of variables.
The first finds a representative $(\bs{w}_1,\ldots,\bs{w}_k)$ of the $\OO_m$-orbit of a point $(\bs{v}_1,\ldots,\bs{v}_k)\in V_k$
in a specific fundamental domain $P$. The second transforms from $(\bs{w}_1,\ldots,\bs{w}_k)$ to the invariants $u_{i,j}$.

\subsection{First change of variables}
\label{subsec:CompCOB1}

Let $\bs{v}_1,\ldots,\bs{v}_k\in\R^m$ with coordinates $\bs{v}_i = (v_{i,1},v_{i,2},\ldots,v_{i,m})$ for each $i$.
As noted in the introduction, the measure $\lambda_{k,m}(\bs{u}) \; d\bs{u}$ is determined by the principal stratum
in terms of the stratification of $V_k$ by orbit types.
Hence, we may restrict our attention to the principal stratum and therefore assume $\{\bs{v}_1,\ldots,\bs{v}_k\}$ is linearly independent.
We first describe a change of variables to a representative $(\bs{w}_1,\ldots,\bs{w}_k)\in\R^{km}$ of the $\OO_m$-orbit $\OO_m(\bs{v}_1,\ldots,\bs{v}_k)\in\R^{km}$
contained in the fundamental domain $P$ where, using coordinates $\bs{w}_i = (w_{i,1},\ldots,w_{i,m})$ for the $\bs{w}_i$,
points in $P$ satisfy $w_{i,j} = 0$ for $j > i$ and $w_{i,i} \geq 0$ for each $i$; see Equation~\eqref{eq:COB1finalIntegral}.
Because the resulting $\bs{w}_i$ will form a linearly independent set, we will have $w_{i,i} > 0$ for each $i$.

Let $R_{\theta}^{(j)}\in\SO_m$ denote the element that acts on coordinates $j$ and $j+1$ as a rotation through the angle $\theta$
and acts trivially on all other coordinates. That is, $R_{\theta}^{(j)}$ acts on each $(v_{i,j},v_{i,j+1})$ as
\[
    R_{\theta}^{(j)} = \begin{pmatrix} \cos\theta & -\sin\theta \\ \sin\theta & \cos\theta \end{pmatrix}.
\]
We will define the $\bs{w}_i$ by successively applying rotations $R_{\theta}^{(j)}$ to make specific coordinates vanish following the approach
of \cite{HofRafRuedEulerAngles} recalled in Section~\ref{sec:EA}.

Let us illustrate the procedure with the case $m = k = 2$, which involves a single rotation and indicates the change of variables at
each step of the general case.
Let $v_{1,1}^\prime = \sqrt{v_{1,1}^2 + v_{1,2}^2}$, choose $\theta\in (-\pi,\pi]$ so that
\[
    R_{\theta}^{(1)}(v_{1,1}^\prime, 0) = (v_{1,1},v_{1,2})
    \quad\text{with }
    v_{1,1}^\prime > 0,
\]
and define $(v_{2,1}^\prime, v_{2,2}^\prime) = R_{-\theta}^{(1)}(v_{2,1},v_{2,2})$. Then
$(v_{1,1},v_{1,2}, v_{2,1}, v_{2,2})$ is given by
\[
    (v_{1,1}^\prime\cos\theta, v_{1,1}^\prime\sin\theta,
        v_{2,1}^\prime\cos\theta - v_{2,2}^\prime\sin\theta,
        v_{2,1}^\prime\sin\theta + v_{2,2}^\prime\cos\theta),
\]
and the Jacobian of the change of variables is given by
\[
    \frac{\partial (v_{1,1},v_{1,2}, v_{2,1}, v_{2,2})}{\partial (v_{1,1}^\prime,\theta,v_{2,1}^\prime,v_{2,2}^\prime)}
    =
    \begin{pmatrix}
        \cos\theta    &   - v_{1,1}^\prime\sin\theta   &   0   &   0   \\
        \sin\theta    &     v_{1,1}^\prime\cos\theta   &   0   &   0   \\
        0 & - v_{2,1}^\prime\sin\theta - v_{2,2}^\prime\cos\theta & \cos\theta & -\sin\theta
        \\
        0 & v_{2,1}^\prime\cos\theta - v_{2,2}^\prime\sin\theta & \sin\theta & \cos\theta
    \end{pmatrix}.
\]
Then we have
\[
    dv_{1,1}\wedge dv_{1,2}\wedge dv_{2,1}\wedge dv_{2,2}
        =   v_{1,1}^\prime
        dv_{1,1}^\prime\wedge d\theta\wedge dv_{2,1}^\prime\wedge dv_{2,2}^\prime.
\]

Now, we consider the general case of arbitrary $k \leq m$. Let us first explain our indexing convention.
Subscripts will continue to index coordinates of the $k$ vectors $\bs{v}_i$ as above. We will apply a sequence
of rotations, each targeting a specific vector $\bs{v}_i$ to make a specific coordinate vanish, and superscripts
will index this sequence. The angle $\theta^{(i,j)}$ is the $j$th rotation targeting the $i$th vector; after
its application, the $(m-j+1)$st coordinate of the $i$th vector will vanish. The coordinates after the application
the rotation through $\theta^{(i,j)}$ will be denoted $v_{k,\ell}^{(i,j)}$.
Note that we order the variables lexicographically, i.e., $v_{1,1}, v_{1,2}, \ldots, v_{1,m}, v_{2,1},\ldots,v_{k,m}$,
and similarly for the $v_{k,\ell}^{(i,j)}$.

In the first step, we define a change of variables $(v_{i,j})\mapsto (v_{i,j}^{(1,1)})$ such that $v_{1,m}^{(1,1)} = 0$
and $v_{1,m-1}^{(1,1)} > 0$. This is accomplished as in the case of $k=m=2$ with a rotation
$R_{\theta^{(1,1)}}^{(m-1)}$ with $\theta^{(1,1)}\in(-\pi,\pi]$, so that $v_{i,j}^{(1,1)} = v_{i,j}$ for all $i,j$ with
$j < m - 1$. The resulting change of variables is computed similarly to the case $k=m=2$ and is given by
\[
    \bigwedge\limits_{\substack{1\leq i\leq k \\ 1\leq j \leq m}} dv_{i,j}
        =   v_{1,m-1}^{(1,1)} \bigwedge\limits_{j=1}^{m-1} dv_{1,j}^{(1,1)} \wedge d\theta^{(1,1)}\wedge
        \bigwedge\limits_{\substack{2\leq i\leq k \\ 1\leq j \leq m}} dv_{i,j}^{(1,1)}.
\]

For the next step, we define $(v_{i,j}^{(1,1)})\mapsto (v_{i,j}^{(1,2)})$ such that $v_{1,m}^{(1,2)} = v_{1,m-1}^{(1,2)} = 0$
and $v_{1,m-2}^{(1,2)} > 0$, with $v_{i,j}^{(1,2)} = v_{i,j}^{(1,1)}$ for $j < m-2$.
Here, we apply a rotation $R_{\theta^{(1,2)}}^{(m-2)}$ chosen so that $R_{-\theta^{(1,2)}}^{(m-2)}$ rotates
$(v_{1,m-2}^{(1,1)},v_{1,m-1}^{(1,1)})$ to the positive $v_{m-2}$-axis; as $v_{1,m-1}^{(1,1)} > 0$, we have $\theta^{(1,2)}\in(0,\pi)$.
Then
\[
    \bigwedge\limits_{j=1}^{m-1} dv_{1,j}^{(1,1)} \wedge d\theta^{(1,1)}\wedge
        \bigwedge\limits_{\substack{2\leq i\leq k \\ 1\leq j \leq m}} dv_{i,j}^{(1,1)}
        =
        v_{1,m-2}^{(1,2)} \bigwedge\limits_{j=1}^{m-2} dv_{1,j}^{(1,2)} \wedge d\theta^{(1,2)} \wedge d\theta^{(1,1)}\wedge
            \bigwedge\limits_{\substack{2\leq i\leq k \\ 1\leq j \leq m}} dv_{i,j}^{(1,2)}
\]
so that
\begin{equation}
\label{eq:COB1Step2PreSub}
    \bigwedge\limits_{\substack{1\leq i\leq k \\ 1\leq j \leq m}} dv_{i,j}
        =   v_{1,m-1}^{(1,1)} v_{1,m-2}^{(1,2)} \bigwedge\limits_{j=1}^{m-2} dv_{1,j}^{(1,2)} \wedge d\theta^{(1,2)} \wedge d\theta^{(1,1)}\wedge
            \bigwedge\limits_{\substack{2\leq i\leq k \\ 1\leq j \leq m}} dv_{i,j}^{(1,2)}.
\end{equation}
However, note that
\[
    v_{1,m-2}^{(1,2)} = \sqrt{(v_{1,m-2}^{(1,1)})^2 + (v_{1,m-1}^{(1,1)})^2},
\]
hence
\begin{align*}
    v_{1,m-1}^{(1,1)}
        &=      \sqrt{(v_{1,m-2}^{(1,2)})^2 - (v_{1,m-2}^{(1,1)})^2}
        \\&=    \sqrt{(v_{1,m-2}^{(1,2)})^2 - (v_{1,m-2}^{(1,2)}\cos\theta^{(1,2)})^2}
        \\&=    v_{1,m-2}^{(1,2)}\sin\theta^{(1,2)}.
\end{align*}
Applying this substitution to Equation~\eqref{eq:COB1Step2PreSub} yields
\begin{equation}
\label{eq:COB1Step2}
    \bigwedge\limits_{\substack{1\leq i\leq k \\ 1\leq j \leq m}} dv_{i,j}
        =   (v_{1,m-2}^{(1,2)})^2\sin\theta^{(1,2)}
            \bigwedge\limits_{j=1}^{m-2} dv_{1,j}^{(1,2)} \wedge d\theta^{(1,2)} \wedge d\theta^{(1,1)}\wedge
            \bigwedge\limits_{\substack{2\leq i\leq k \\ 1\leq j \leq m}} dv_{i,j}^{(1,2)}.
\end{equation}

To consider one more step in detail, define $(v_{i,j}^{(1,2)})\mapsto (v_{i,j}^{(1,3)})$ such that
$v_{1,m}^{(1,3)} = v_{1,m-1}^{(1,3)} = v_{1,m-2}^{(1,3)} = 0$
and $v_{1,m-3}^{(1,3)} > 0$. We choose $\theta^{(1,3)}\in(0,\pi)$ so that $R_{-\theta^{(1,3)}}^{(m-3)}$ rotates
$(v_{1,m-3}^{(1,2)}, v_{1,m-2}^{(1,2)})$ to the positive $v_{m-3}$-axis, yielding the following change of variables from
Equation~\eqref{eq:COB1Step2}:
\begin{align}
    \nonumber
    &(v_{1,m-2}^{(1,2)})^2\sin\theta^{(1,2)}
        \bigwedge\limits_{j=1}^{m-2} dv_{1,j}^{(1,2)} \wedge d\theta^{(1,2)} \wedge d\theta^{(1,1)}\wedge
        \bigwedge\limits_{\substack{2\leq i\leq k \\ 1\leq j \leq m}} dv_{i,j}^{(1,2)}
    \\\label{eq:COB1Step3PreSub}
    &\qquad\qquad=
        (v_{1,m-2}^{(1,2)})^2 v_{1,m-3}^{(1,3)} \sin\theta^{(1,2)}
            \bigwedge\limits_{j=1}^{m-3} dv_{1,j}^{(1,3)} \wedge d\theta^{(1,3)} \wedge d\theta^{(1,2)} \wedge d\theta^{(1,1)}\wedge
            \bigwedge\limits_{\substack{2\leq i\leq k \\ 1\leq j \leq m}} dv_{i,j}^{(1,3)}.
\end{align}
Once again,
\[
    v_{1,m-3}^{(1,3)} = \sqrt{(v_{1,m-3}^{(1,2)})^2 + (v_{1,m-2}^{(1,2)})^2},
\]
and $v_{1,m-3}^{(1,2)} = v_{1,m-3}^{(1,3)}\cos\theta^{(1,3)}$ so that
\[
    v_{1,m-2}^{(1,2)}
        =       v_{1,m-3}^{(1,3)}\sin\theta^{(1,3)},
\]
and Equation~\eqref{eq:COB1Step3PreSub} becomes
\[
    \bigwedge\limits_{\substack{1\leq i\leq k \\ 1\leq j \leq m}} dv_{i,j}
        =   (v_{1,m-3}^{(1,3)})^3 \sin\theta^{(1,2)} \sin^2\theta^{(1,3)}
            \bigwedge\limits_{j=1}^{m-3} dv_{1,j}^{(1,3)} \wedge d\theta^{(1,3)} \wedge d\theta^{(1,2)} \wedge d\theta^{(1,1)}\wedge
            \bigwedge\limits_{\substack{2\leq i\leq k \\ 1\leq j \leq m}} dv_{i,j}^{(1,3)}.
\]

Continuing in this way, we apply $R_{\theta^{(1,j)}}^{(m-j)}$ for $j = 4,5,\ldots,m-1$ to obtain coordinates $v_{i,j}^{(1,m-1)}$ such that
$v_{1,1}^{(1,m-1)} > 0$ and $v_{1,j}^{(1,m-1)} = 0$ for $j > 1$. Identical computations to those above at each step yield
\[
    \bigwedge\limits_{\substack{1\leq i\leq k \\ 1\leq j \leq m}} dv_{i,j}
        =   (v_{1,1}^{(1,m-1)})^{m-1}
            \prod\limits_{j=2}^{m-1} \sin^{j-1}\theta^{(1,j)} \;
            dv_{1,1}^{(1,m-1)} \wedge \bigwedge\limits_{j=1}^{m-1} d\theta^{(1,m-j)} \wedge
            \bigwedge\limits_{\substack{2\leq i\leq k \\ 1\leq j \leq m}} dv_{i,j}^{(1,m-1)}.
\]

We then apply the same process to target $\bs{v}_2$. The first rotation $R_{\theta^{(2,1)}}^{(m-1)}$ yields coordinates
$v_{i,j}^{(2,1)}$ such that $v_{2,m}^{(2,1)} = 0$ and $v_{2,m-1}^{(2,1)} > 0$, the second $R_{\theta^{(2,2)}}^{(m-2)}$
yields $v_{i,j}^{(2,2)}$ where the last two coordinates of $\bs{v}_2$ vanish, etc., ending with $R_{\theta^{(2,m-2)}}^{(2)}$,
yielding $v_{i,j}^{(2,m-2)}$ such that $v_{2,2}^{(2,m-2)} > 0$ and $v_{2,j}^{(2,m-2)} = 0$ for $j\geq 3$,
with the $v_{1,i}^{(1,m-1)} = v_{1,i}^{(2,m-2)}$ unchanged.
Targeting $\bs{v}_3$, we apply $m-3$ rotations so that the first three resulting coordinates are nonzero, etc., finally ending
with coordinates $v_{i,j}^{(k,m-k)}$ such that $v_{i,j}^{(k,m-k)} = 0$ whenever $j > i$ and $v_{i,i}^{(k,m-k)} > 0$ for each $i$.
If $k < m$, then the vectors $(\bs{v}_1,\ldots, \bs{v}_{k})$ are fixed by a reflection so that their $\SO_m$- and $\OO_m$-orbits coincide;
if $k = m$, then in the final step, we apply a reflection to ensure that $v_{k,k}^{(k,m-k)} > 0$, introducing a factor of
$2 = \delta_{k,m} + 1$. Identical computations to those above and a little book-keeping yields
\begin{equation}
\label{eq:COB1final}
    \bigwedge\limits_{\substack{1\leq i\leq k \\ 1\leq j \leq m}} dv_{i,j}
        =   (\delta_{k,m}+1)\prod\limits_{i=1}^{k} (v_{i,i}^{(k,m-k)})^{m-i}
            \prod\limits_{i=1}^{k} \prod\limits_{j=2}^{m-i} \sin^{j-1}\theta^{(i,j)}
            \\
            \bigwedge\limits_{i=1}^{k}\left(
                \bigwedge\limits_{j=1}^{i} dv_{i,j}^{(k,m-k)} \wedge
                \bigwedge\limits_{\ell=i}^{m-1} d\theta^{(i,m-\ell)}
            \right).
\end{equation}
Set $w_{i,j} = v_{i,j}^{(k,m-k)}$ for $1\leq j\leq i\leq k$, let
\[
    W = \{ w_{i,j} \in\R \mid
        1\leq j\leq i\leq k,
        w_{i,i} > 0
        \}
\]
denote the interior of the fundamental domain $P$ consisting of linearly independent sets of vectors,
and let
\[
    \Theta = \{ \theta^{(\ell,p)} \in(-\pi,\pi) \mid
        1\leq\ell\leq k,
        1\leq p\leq m-\ell,
        \theta_{\ell,p}\in(0,\pi)\text{ for }p > 1
        \}.
\]
Now observe that, for a smooth invariant integrable function $f$ on $V_k$, we have that $f(\bs{v}) = f(\bs{w})$, so we can express
\begin{align}
    \nonumber
    \int\limits_{V_k} f(\bs{v}) \bigwedge\limits_{\substack{1\leq i\leq k \\ 1\leq j \leq m}} dv_{i,j}
        &=      (\delta_{k,m}+1) \int\limits_{W\times\Theta} f(\bs{v})
                \prod\limits_{i=1}^{k} w_{i,i}^{m-i}
                \prod\limits_{i=1}^{k} \prod\limits_{j=2}^{m-i} \sin^{j-1}\theta^{(i,j)}
                \bigwedge\limits_{i=1}^{k}\left(
                    \bigwedge\limits_{j=1}^{i} dw_{i,j} \wedge
                    \bigwedge\limits_{\ell=i}^{m-1} d\theta^{(i,m-\ell)}
                \right)
        \\&=    \label{eq:COB1Integral}
                (\delta_{k,m}+1) \int\limits_W f(\bs{w}) \prod\limits_{i=1}^{k} w_{i,i}^{m-i}
                    \bigwedge\limits_{i=1}^{k} \bigwedge\limits_{j=1}^{i} dw_{i,j}
                \int\limits_\Theta
                    \prod\limits_{i=1}^{k} \prod\limits_{j=2}^{m-i} \sin^{j-1}\theta^{(i,j)}
                \bigwedge\limits_{i=1}^{k} \bigwedge\limits_{\ell=i}^{m-1} d\theta^{(i,m-\ell)}.
\end{align}
The integral over $\Theta$ can be computed using
\[
    \int\limits_0^\pi \sin^\ell \theta\;d\theta
        =   \frac{\sqrt{\pi}\;\Gamma\left(\frac{\ell+1}{2}\right)}{\Gamma\left(\frac{\ell+2}{2}\right)},
\]
see \cite[6.2.1 and 6.2.2]{AbramowitzStegun}, and we obtain
\begin{align*}
    \int\limits_\Theta \prod\limits_{i=1}^{k} \prod\limits_{j=2}^{m-i} \sin^{j-1}\theta^{(i,j)}
        \bigwedge\limits_{i=1}^{k} \bigwedge\limits_{\ell=i}^{m-1} d\theta^{(i,m-\ell)}
    &=
    \prod\limits_{i=1}^{\min\{k,m-1\}} 2\pi \prod\limits_{j=2}^{m-i}
        \frac{\sqrt{\pi}\;\Gamma\left(\frac{j}{2}\right)}{\Gamma\left(\frac{j+1}{2}\right)}
    \\&=
    \prod\limits_{i=1}^{\min\{k,m-1\}}
        \frac{ 2 \pi^{(m-i+1)/2} }{\Gamma\left(\frac{m-i+1}{2}\right)}
    \\&=
    \prod\limits_{i=1}^{\min\{k,m-1\}} \operatorname{Vol}( S^{m-i}).
\end{align*}
Noting that in the case $k = m$, the factor $\delta_{k,m} + 1 = \operatorname{Vol}(S^0)$,
Equation~\eqref{eq:COB1Integral} becomes
\begin{equation}
\label{eq:COB1finalIntegral}
    \int\limits_{V_k} f(\bs{v}) \bigwedge\limits_{\substack{1\leq i\leq k \\ 1\leq j \leq m}} dv_{i,j}
        =       \prod\limits_{i=1}^{k} \operatorname{Vol}\left( S^{m-i}\right)
                    \int\limits_W f(\bs{w}) \prod\limits_{i=1}^{k} w_{i,i}^{m-i}
                        \bigwedge\limits_{i=1}^{k} \bigwedge\limits_{j=1}^{i} dw_{i,j}.
\end{equation}

\subsection{Second change of variables}
\label{subsec:CompCOB2}

Our final task is to express the integral
\begin{equation}
\label{eq:COB2Initial}
    \int\limits_W f(\bs{w}) \prod\limits_{i=1}^{k} w_{i,i}^{m-i}
                        \bigwedge\limits_{i=1}^{k} \bigwedge\limits_{j=1}^{i} dw_{i,j}
\end{equation}
from Equation~\eqref{eq:COB1finalIntegral} in terms of the invariants $\left( u_{i,j} \right)_{1\leq j\leq i \leq k}$.
Recall that
\begin{align*}
    \bs{w}_1    &=      (w_{1,1},0,\ldots,0),
    \\
    \bs{w}_2    &=      (w_{2,1},w_{2,2},0,\ldots,0),
    \\
    \vdots
    \\
    \bs{w}_k    &=      (w_{k,1},w_{k,2},\ldots,w_{k,k},0,\ldots,0),
\end{align*}
and $w_{i,i} > 0$ for each $i=1,\ldots,k$.
We continue to use lexicographic ordering of the variables, i.e., ${w_{1,1}, w_{2,1}, w_{2,2},\ldots,w_{k,k}}$ and similarly for the $u_{i,j}$.
As in Section~\ref{sec:intro}, let $G_\ell = (\langle\bs{w}_i,\bs{w}_j\rangle)_{i,j\in[\ell]}$ denote the Gram matrix of the
set $\{\bs{w}_1,\ldots,\bs{w}_\ell\}$ of the first $\ell$ vectors. As usual, we use $|G_\ell|$ to denote the determinant of the
Gram matrix and set $|G_0| = 1$. We begin by computing the diagonal coordinates $w_{i,i}$ in terms of the invariants.

\begin{lemma}
\label{lem:COB2Gram}
With the $\bs{w}_i$ and $G_\ell$ as above, we have
\[
    w_{i,i} =   \sqrt{\frac{|G_i|}{|G_{i-1}|}}
\]
for $i = 1,2,\ldots,k$.
\end{lemma}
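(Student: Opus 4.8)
The plan is to exploit the triangular shape of the $\bs{w}_i$ coming from the first change of variables. First I would record that, since $w_{i,j}=0$ for $j>i$, the $k\times m$ matrix $W$ whose $i$th row is $\bs{w}_i$ is supported in its first $k$ columns, and the square block $W^{(\ell)}:=(w_{i,j})_{i,j\in[\ell]}$ is lower triangular with diagonal entries $w_{1,1},\ldots,w_{\ell,\ell}$, all positive by hypothesis. Because $\langle\bs{w}_i,\bs{w}_j\rangle=\sum_{p=1}^{\min\{i,j\}}w_{i,p}w_{j,p}$, for $i,j\leq\ell$ we have $\langle\bs{w}_i,\bs{w}_j\rangle=\sum_{p=1}^{\ell}w_{i,p}w_{j,p}$, i.e. $G_\ell=W^{(\ell)}\bigl(W^{(\ell)}\bigr)^{\top}$. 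In other words, passing from $W^{(\ell)}$ to $G_\ell$ is precisely the Cholesky factorization, the same one used in the proof of Corollary~\ref{cor:HilbImage}.

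Next I would take determinants. Since the determinant of a triangular matrix is the product of its diagonal entries, $|G_\ell|=\det\!\bigl(W^{(\ell)}\bigr)^2=\prod_{i=1}^{\ell}w_{i,i}^{2}$. Dividing consecutive leading principal minors and using the convention $|G_0|=1$ yields $|G_i|/|G_{i-1}|=w_{i,i}^{2}$ for $i=1,\ldots,k$; taking the positive square root, which is legitimate because $w_{i,i}>0$, gives the asserted formula $w_{i,i}=\sqrt{|G_i|/|G_{i-1}|}$.

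There is essentially no obstacle here: the only point requiring a moment's care is the bookkeeping that $G_\ell$ is literally the Gram matrix of the truncated triangular block $W^{(\ell)}$ rather than of the full ambient rows, which is immediate from $w_{i,p}=0$ for $p>i$. One could alternatively phrase the argument through the standard identity expressing $|G_\ell|$ as the squared $\ell$-volume of the parallelepiped on $\bs{w}_1,\ldots,\bs{w}_\ell$ together with the base-times-height recursion for that volume, but the triangular-determinant computation is the most direct route and dovetails with the normalization already imposed on the $\bs{w}_i$.
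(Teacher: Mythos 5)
Your proof is correct and rests on the same key facts as the paper's: the factorization $G_\ell = W^{(\ell)}\bigl(W^{(\ell)}\bigr)^{\top}$ with $W^{(\ell)}$ lower triangular and the resulting identity $|G_\ell| = \prod_{i=1}^{\ell} w_{i,i}^2$. The only difference is organizational --- the paper packages the computation as an induction on $i$, while you compute the determinant directly and divide consecutive leading minors --- which is immaterial.
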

\begin{proof}
We argue by induction on $i$. If $i = 1$, then
\[
    w_{1,1}^2   =   \langle \bs{w}_1,\bs{w}_1\rangle    =   |G_1|
\]
so that
\[
    w_{1,1}     =   \sqrt{\frac{|G_1|}{|G_0|}}.
\]
So assume there is an $i > 1$ such that
\[
    w_{j,j}   =   \sqrt{\frac{|G_{j}|}{|G_{j-1}|}}
\]
for $1\leq j < i$. Let
\[
    \bs{W}_i =   \begin{pmatrix}
                    w_{1,1} &   0       &   \cdots  &   0   \\
                    w_{2,1} &   w_{2,2} &   \cdots  &   0   \\
                    \vdots  &           &   \ddots  &   \vdots  \\
                    w_{k,1} &   w_{k,2} &   \cdots  &   w_{k,k}
                    \end{pmatrix},
\]
and then as the $\bs{w}_j$ are the columns of $\bs{W}_i$ extended by $0$, we have $G_i = \bs{W}_i\bs{W}_i^T$
so that $|G_i| = |\bs{W}_i|^2$. Then
\begin{align*}
    \det\bs{W}_i
        &=      \prod\limits_{j=1}^i w_{j,j}
        \\&=    w_{i,i} \prod\limits_{j=1}^{i-1} \sqrt{\frac{|G_j|}{|G_{j-1}|}}
        \\&=    w_{i,i}\sqrt{|G_{i-1}|}.
\end{align*}
and then $|G_i| = |\det\bs{W}_i|^2 = w_{i,i}^2 |G_{i-1}|$, completing the proof.
\end{proof}

Using Lemma~\ref{lem:COB2Gram}, the product in Equation~\eqref{eq:COB2Initial} is telescoping,
\[
    \prod\limits_{i=1}^k w_{i,i}^{m-i}
    =
    \prod\limits_{i=1}^k \frac{|G_i|^{(m-i)/2} }{|G_{i-1}|^{(m-i)/2} }
    =
    |G_k|^{(m-k)/2} \prod\limits_{i=1}^{k-1} \sqrt{ |G_i| },
\]
so we can rewrite Equation~\eqref{eq:COB2Initial} as
\begin{equation}
\label{eq:COB2ProductGram}
    \int\limits_W f(\bs{w}) |G_k|^{(m-k)/2} \prod\limits_{i=1}^{k-1} \sqrt{ |G_i| }
        \bigwedge\limits_{i=1}^{k} \bigwedge\limits_{j=1}^{i} dw_{i,j}.
\end{equation}
We now consider the change of variables expressing the $dw_{i,j}$ in terms of the invariants.

\begin{lemma}
\label{lem:COB2Jacobian}
Ordering the variables lexicographically as above and choosing the representative $u_{i,j}$ such that $j \leq i$,
the Jacobian $\frac{\partial\bs{w}}{\partial\bs{u}}$ is lower-triangular with determinant given by
\[
    \left|\frac{\partial\bs{w}}{\partial\bs{u}}\right|
    =
    \frac{1}{2^k \sqrt{|G_1||G_2|\cdots|G_k|} }.
\]
\end{lemma}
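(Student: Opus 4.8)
The plan is to compute the Jacobian $\frac{\partial \bs{w}}{\partial \bs{u}}$ directly from the defining relations $u_{i,j} = \langle \bs{w}_i, \bs{w}_j\rangle = \sum_{\ell=1}^{\min\{i,j\}} w_{i,\ell} w_{j,\ell}$, using the fact that the $\bs{w}_i$ live in the fundamental domain $P$, i.e., $w_{i,\ell} = 0$ for $\ell > i$. First I would fix the lexicographic order $w_{1,1}, w_{2,1}, w_{2,2}, \ldots, w_{k,k}$ on the $w$-variables and the matching order $u_{1,1}, u_{2,1}, u_{2,2}, \ldots, u_{k,k}$ on the $u$-variables, and observe that $u_{i,j}$ with $j \leq i$ depends only on the coordinates $w_{i',\ell}$ with $i' \leq i$ (since the entries of $\bs{w}_{i'}$ for $i' > i$ do not enter $\langle \bs{w}_i, \bs{w}_j\rangle$). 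This already shows that $\frac{\partial \bs{u}}{\partial \bs{w}}$ is block lower-triangular with respect to the grouping of variables by their first index $i$; hence so is its inverse $\frac{\partial \bs{w}}{\partial \bs{u}}$, and the determinant is the product of the determinants of the diagonal blocks.

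Next I would analyze the $i$th diagonal block, which records $\partial u_{i,j}/\partial w_{i,\ell}$ for $1 \leq j \leq i$ and $1 \leq \ell \leq i$. From $u_{i,j} = \sum_{\ell=1}^{j} w_{i,\ell} w_{j,\ell}$ for $j < i$ we get $\partial u_{i,j}/\partial w_{i,\ell} = w_{j,\ell}$ for $\ell \leq j$ and $0$ for $\ell > j$; and $u_{i,i} = \sum_{\ell=1}^{i} w_{i,\ell}^2$ gives $\partial u_{i,i}/\partial w_{i,\ell} = 2 w_{i,\ell}$. Ordering rows by $j = 1, \ldots, i$ and columns by $\ell = 1, \ldots, i$, the rows $j < i$ are $(w_{j,1}, \ldots, w_{j,j}, 0, \ldots, 0)$ and the last row is $(2w_{i,1}, \ldots, 2w_{i,i})$. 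Thus the $i$th block of $\frac{\partial \bs{u}}{\partial \bs{w}}$ is, up to scaling its last row by $2$, exactly $\bs{W}_i^{(i)}$, the upper-left $i \times i$ submatrix of the lower-triangular matrix $\bs{W}_i$ from Lemma~\ref{lem:COB2Gram}; more precisely its determinant equals $2 \prod_{j=1}^{i} w_{j,j}$, since $\bs{W}_i^{(i)}$ is lower-triangular with diagonal $w_{1,1}, \ldots, w_{i,i}$ and the factor $2$ from the last row contributes once. Using Lemma~\ref{lem:COB2Gram}, $\prod_{j=1}^{i} w_{j,j} = \prod_{j=1}^{i}\sqrt{|G_j|/|G_{j-1}|} = \sqrt{|G_i|}$ (telescoping, with $|G_0| = 1$), so the $i$th diagonal block of $\frac{\partial \bs{u}}{\partial \bs{w}}$ has determinant $2\sqrt{|G_i|}$.

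Multiplying over $i = 1, \ldots, k$ gives $\left|\frac{\partial \bs{u}}{\partial \bs{w}}\right| = \prod_{i=1}^{k} 2\sqrt{|G_i|} = 2^k \sqrt{|G_1||G_2|\cdots|G_k|}$, and inverting yields the claimed formula $\left|\frac{\partial \bs{w}}{\partial \bs{u}}\right| = \frac{1}{2^k \sqrt{|G_1||G_2|\cdots|G_k|}}$. Finally, to see that $\frac{\partial \bs{w}}{\partial \bs{u}}$ is itself lower-triangular (not merely block lower-triangular), I would note that within each diagonal block $\frac{\partial \bs{u}}{\partial \bs{w}}$ is lower-triangular in the $(j,\ell)$-ordering described above, so its inverse is too, and the inverse of a block lower-triangular matrix whose diagonal blocks are lower-triangular is lower-triangular. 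The only mildly delicate point — and the one I would be most careful about — is the bookkeeping of the sign and the factor of $2$: one must check that exactly one row of each diagonal block (the $u_{i,i}$ row) carries the factor $2$ and that the lexicographic orderings on the $w$'s and $u$'s are aligned so that the diagonal of each block is $(w_{1,1}, \ldots, w_{i-1,i-1}, 2w_{i,i})$ in that order, giving a positive determinant since all $w_{j,j} > 0$ on $W$.
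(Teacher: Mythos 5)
Your proposal is correct and follows essentially the same route as the paper: both establish (block) lower-triangularity of $\frac{\partial\bs{u}}{\partial\bs{w}}$ from the fact that $u_{i,j}$ involves only $w_{r,s}$ with $r\leq i$, $s\leq j$, read off the diagonal entries $(1+\delta_{i,j})w_{j,j}$ (your per-block determinant $2\prod_{j=1}^i w_{j,j}=2\sqrt{|G_i|}$ is just this computation organized blockwise), telescope via Lemma~\ref{lem:COB2Gram}, and invert. Your extra care about the single factor of $2$ per block and the alignment of the lexicographic orderings matches the paper's $(1+\delta_{i,j})$ bookkeeping exactly.
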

\begin{proof}
Because $u_{i,j}$ for $j\leq i$ depends only on $w_{r,s}$ with $r\leq i$ and $s\leq j$, we have that $\frac{\partial\bs{u}}{\partial\bs{w}}$
and hence its inverse $\frac{\partial\bs{w}}{\partial\bs{u}}$ is lower-triangular. The diagonal entries of
$\frac{\partial\bs{u}}{\partial\bs{w}}$ are
\[
    \frac{\partial u_{i,j}}{\partial w_{i,j}}
    =
    (1 + \delta_{i,j})w_{j,j},
\]
so that, applying Lemma~\ref{lem:COB2Gram},
\begin{align*}
    \left|\frac{\partial\bs{u}}{\partial\bs{w}}\right|
        &=      2^k\prod\limits_{i=1}^k\prod\limits_{j=1}^i w_{j,j}
        \\&=    2^k\prod\limits_{i=1}^k\prod\limits_{j=1}^i \sqrt{\frac{|G_j|}{|G_{j-1}|}}
        \\&=    2^k\sqrt{G_1 G_2 \cdots G_k}.
        \qedhere
\end{align*}
\end{proof}

Letting $F(\bs{u})$ be a smooth function on $X$ such that
$F(\bs{u}) = f(\bs{v})$ and applying Lemma~\ref{lem:COB2Jacobian}, Equation~\eqref{eq:COB2ProductGram} becomes
\begin{equation}
\label{eq:COB2ProductRemoved}
    \frac{1}{2^k}\int\limits_W F(\bs{u}) |G_k|^{(m-k-1)/2}
        \bigwedge\limits_{i=1}^{k} \bigwedge\limits_{j=1}^{i} du_{i,j}.
\end{equation}
Recalling that $X = \bs{u}(V_k)$ denotes the image of the Hilbert embedding,
Equation~\eqref{eq:COB1finalIntegral} can now be expressed as
\begin{equation}
\label{eq:FinalIntegral}
    \int\limits_{V_k} f(\bs{v}) \bigwedge\limits_{\substack{1\leq i\leq k \\ 1\leq j \leq m}} dv_{i,j}
        =       \frac{1}{2^{k}}\prod\limits_{i=1}^{k} \operatorname{Vol}\left( S^{m-i}\right)
                    \int\limits_X F(\bs{u}) |G_k|^{(m-k-1)/2}
                        \bigwedge\limits_{i=1}^{k} \bigwedge\limits_{j=1}^{i} du_{i,j},
\end{equation}
which completes the proof of Theorem~\ref{thrm:Main}.

\section{Examples}
\label{sec:Ex}

\subsection{Two particles in $\R^3$}
\label{subsec:Ex2inR3}

As a concrete illustration,
let us consider the example of two particles in $\R^3$, i.e., $k = 2$ and $m = 3$.
Let $\bs{v}_1 = (v_{11}, v_{12}, v_{13})$ and $\bs{v}_2 = (v_{21}, v_{22}, v_{23})$
with $\{\bs{v}_1,\bs{v}_2\}$ linearly independent. Define
\[
    \rho_i  =   \sqrt{v_{i,1}^2 + v_{i,2}^2 + v_{i,3}^2}
\]
to be the length of $\bs{v}_i$ for $i=1,2$ and let $\mu\in[0,\pi)$ denote the angle between
$\bs{v}_1$ and $\bs{v}_2$,
\[
    \cos\mu =   \frac{\langle\bs{v}_1,\bs{v}_2\rangle}{\rho_1\rho_2}.
\]
To describe the change of basis in Section~\ref{subsec:CompCOB1}, let us for brevity use the simplified notation
\[
    (v_{11}, v_{12}, v_{13}, v_{21}, v_{22}, v_{23})
    \mapsto
    (w_{11}, \theta, \phi, w_{21}, w_{22}, \psi),
\]
where, in the notation in Section~\ref{subsec:CompCOB1}, $\theta = \theta^{(1,1)}\in(-\pi,\pi]$,
$\phi = \theta^{(1,2)}\in(0,\pi)$, and $\psi = \theta^{(2,1)}\in(-\pi,\pi]$. The above transformation is given by three rotations,
\[
    R_\theta    =   \begin{pmatrix} 1 & 0 & 0 \\ 0 & \cos\theta & -\sin\theta \\ 0 & \sin\theta & \cos\theta \end{pmatrix},
    \quad
    R_\phi    =   \begin{pmatrix} \cos\phi & -\sin\phi & 0 \\ \sin\phi & \cos\phi & 0 \\ 0 & 0 & 1\end{pmatrix},
    \quad
    R_\psi    =   \begin{pmatrix} 1 & 0 & 0 \\ 0 & \cos\psi & -\sin\psi \\ 0 & \sin\psi & \cos\psi \end{pmatrix},
\]
chosen such that the third coordinate of $R_{-\theta}\bs{v}_1$ is zero and the second coordinate is positive,
the second coordinate of $R_{-\phi} R_{-\theta}\bs{v}_1$ is zero and the first is positive,
and the third coordinate of $R_{-\psi} R_{-\phi} R_{-\theta}\bs{v}_2$ is zero and the second is positive.
Then, letting $R = R_\theta R_\phi R_\psi$, we have $R^{-1}\bs{v}_1 = \bs{w}_1 = (w_{1,1},0,0)$ and
$R^{-1}\bs{v}_2 = \bs{w}_2 = (w_{2,1},w_{2,2},0)$, where
\[
    w_{1,1} =  \rho_1,
    \quad
    w_{2,1} =  \rho_2 \cos\mu,
    \quad
    w_{2,2} =  \rho_2 \sin\mu.
\]
From
\[
R
=
\begin{pmatrix}
 \cos \phi & -\sin \phi \cos \psi & \sin \phi \sin \psi \\
 \cos \theta \sin \phi & \cos \theta \cos \phi \cos \psi-\sin \theta
   \sin \psi & -\sin \theta \cos \psi-\cos \theta \cos \phi \sin
   \psi \\
 \sin \theta \sin \phi & \sin \theta \cos \phi \cos \psi+\cos \theta
   \sin \psi & \cos \theta \cos \psi-\sin \theta \cos \phi \sin \psi
\end{pmatrix},
\]
one computes that
\[
    dv_{1,1}\wedge dv_{1,2}\wedge dv_{1,3}\wedge dv_{2,1}\wedge dv_{2,2}\wedge dv_{2,3}
    =
    w_{1,1}^2 w_{2,2} \sin\phi \;
    dw_{1,1}\wedge d\phi \wedge d\theta \wedge dw_{2,1} \wedge dw_{2,2} \wedge d\psi.
\]
The integral over the $\OO_3$-coordinates $(\theta,\phi,\psi)$, corresponding to the integral over $\Theta$ in
Equation~\eqref{eq:COB1Integral}, is
\[
    \int_{-\pi}^\pi \int_{0}^\pi \int_{-\pi}^\pi \sin\phi \; d\theta d\phi d\psi
    =
    8\pi^2.
\]
That is, if $f$ is an integrable $\OO_3$-invariant smooth function in the $v_{i,j}$, then
\[
    \int_{\R^{6}} f(\bs{v})\;dv_{1,1}\wedge dv_{1,2}\wedge dv_{1,3}\wedge dv_{2,1}\wedge dv_{2,2}\wedge dv_{2,3}
    =
    8\pi^2 \int_W f(\bs{w}) w_{1,1}^2 w_{2,2} \; dw_{1,1}\wedge dw_{2,1} \wedge dw_{2,2},
\]
where $W = \{ (w_{1,1}, w_{2,1}, w_{2,2})\in\R^3 \mid w_{1,1}, w_{2,2} > 0\}$.

For the second change of variables transforming the $w_{i,j}$ into the invariants $u_{i,j}$, note that
\[
    u_{1,1} =   w_{1,1}^2,
    \quad
    u_{2,1} =   w_{1,1}w_{2,1},
    \quad
    u_{2,2} =   w_{2,1}^2 + w_{2,2}^2
\]
i.e.,
\[
    w_{1,1} =   \sqrt{u_{1,1}},
    \quad
    w_{2,1} =   \frac{u_{2,1}}{\sqrt{u_{1,1}}},
    \quad
    w_{2,2} =   \frac{\sqrt{u_{1,1}u_{2,2} - u_{2,1}^2}}{\sqrt{u_{1,1}}}.
\]
Then
\[
    \frac{\partial w_{1,1}}{\partial u_{1,1}}
    =
    \frac{1}{2\sqrt{u_{1,1}}},
    \quad
    \frac{\partial w_{2,1}}{\partial u_{2,1}}
    =
    \frac{1}{\sqrt{u_{1,1}}},
    \quad
    \frac{\partial w_{2,2}}{\partial u_{2,2}}
    =
    \frac{\sqrt{u_{1,1}}}{2\sqrt{u_{1,1}u_{2,2} - u_{2,1}^2}},
\]
so that
\[
    \left|\frac{\partial\bs{w}}{\partial\bs{u}}\right|
    =
    \frac{1}{4\sqrt{u_{1,1}(u_{1,1}u_{2,2} - u_{2,1}^2)}}.
\]
Then we have
\[
    8\pi^2 w_{1,1}^2 w_{2,2} \; dw_{1,1}\wedge dw_{2,1} \wedge dw_{2,2}
    =
    2\pi^2 \; du_{1,1}\wedge du_{2,1}\wedge du_{2,2},
\]
i.e., $\lambda_{2,3} = 2\pi^2$ is constant.
With $f$ as above and $F$ such that $F(\bs{u}) = f(\bs{v})$,
\[
    \int_{\R^{6}} f(\bs{v})\;dv_{1,1}\wedge dv_{1,2}\wedge dv_{1,3}\wedge dv_{2,1}\wedge dv_{2,2}\wedge dv_{2,3}
    =
    2\pi^2 \int_X F(\bs{u}) \; du_{1,1}\wedge du_{2,1} \wedge du_{2,2}.
\]
Using Corollary~\ref{cor:HilbImage}, one computes that the image $X$ of the Hilbert embedding is given by
$u_{1,1}\geq 0$, $u_{2,2}\geq 0$, and $u_{2,1}^2\leq u_{1,1}u_{2,2}$.

\subsection{Two particles in $\R^2$}
\label{subsec:Ex2inR2}

The case of two particles in $\R^2$, i.e., $k = m = 2$, is similar to the previous example but illustrates
the behavior when $k = n$. Let $\bs{v}_1 = (v_{11}, v_{12})$ and $\bs{v}_2 = (v_{21}, v_{22})$
with $\{\bs{v}_1,\bs{v}_2\}$ linearly independent. We once again define the length
\[
    \rho_i  =   \sqrt{v_{i,1}^2 + v_{i,2}^2}
\]
for $i=1,2$ and let $\mu\in[0,\pi)$ be the angle between
$\bs{v}_1$ and $\bs{v}_2$,
\[
    \cos\mu =   \frac{\langle\bs{v}_1,\bs{v}_2\rangle}{\rho_1\rho_2}.
\]
Then the change of basis in Section~\ref{subsec:CompCOB1} is of the form
\[
    (v_{11}, v_{12}, v_{21}, v_{22})
    \mapsto
    (w_{11}, \theta, w_{21}, w_{22}).
\]
This transformation is given by a single rotation
\[
    R_\theta    =   \begin{pmatrix} \cos\theta & -\sin\theta \\ \sin\theta & \cos\theta \end{pmatrix},
\]
chosen such that the second coordinate of $R_{-\theta}\bs{v}_1 = \bs{w}_1$ is zero and the first coordinate is positive.
Then the second coordinate of $R_{-\theta}\bs{v}_2 = \bs{w}_2$ can be assumed positive via the reflection fixing the first coordinate,
introducing a factor of $2$. We again have
\[
    w_{1,1} =  \rho_1,
    \quad
    w_{2,1} =  \rho_2 \cos\mu,
    \quad
    w_{2,3} =  \rho_2 \sin\mu.
\]
One computes that
\[
    dv_{1,1}\wedge dv_{1,2}\wedge dv_{2,1}\wedge dv_{2,2}
    =
    2 w_{1,1} dw_{1,1}\wedge d\theta \wedge dw_{2,1} \wedge dw_{2,2},
\]
and the integral over the $\OO_2$-coordinate $\theta$, that over $\Theta$ in Equation~\ref{eq:COB1Integral}, is simply
\[
    \int_{-\pi}^\pi \; d\theta
    =
    2\pi.
\]
Then if $f$ is an integrable $\OO_2$-invariant smooth function in the $v_{i,j}$,
\[
    \int_{\R^{4}} f(\bs{v})\;dv_{1,1}\wedge dv_{1,2}\wedge dv_{2,1}\wedge dv_{2,2}
    =
    4\pi \int_W f(\bs{w}) w_{1,1} \; dw_{1,1}\wedge dw_{2,1} \wedge dw_{2,2},
\]
where $W = \{ (w_{1,1}, w_{2,1}, w_{2,2})\in\R^3 \mid w_{1,1}, w_{2,2} > 0\}$.

The second change of variables transforming the $w_{i,j}$ into the invariants $u_{i,j}$ is identical to that in Section~\ref{subsec:Ex2inR3}
so that
\[
    4\pi w_{1,1} \; dw_{1,1}\wedge dw_{2,1} \wedge dw_{2,2}
    =
    \frac{\pi}{\sqrt{u_{1,1}u_{2,2} - u_{2,1}^2}} \; du_{1,1}\wedge du_{2,1}\wedge du_{2,2},
\]
i.e.,
\[
    \lambda_{2,2} = \frac{\pi}{\sqrt{u_{1,1}u_{2,2} - u_{2,1}^2}}.
\]
Note that $\lambda_{2,2}$ is singular when $|G_2| = u_{1,1}u_{2,2} - u_{2,1}^2 = 0$, i.e., off the principal orbit type.
With $f$ as above and $F$ such that $F(\bs{u}) = f(\bs{v})$,
\[
    \int_{\R^{4}} f(\bs{v})\;dv_{1,1}\wedge dv_{1,2}\wedge dv_{2,1}\wedge dv_{2,2}
    =
    \pi \int_X \frac{F(\bs{u})}{\sqrt{u_{1,1}u_{2,2} - u_{2,1}^2}} \; du_{1,1}\wedge du_{2,1} \wedge du_{2,2}.
\]
By Corollary~\ref{cor:HilbImage}, we again have that $X$ is given by
$u_{1,1}\geq 0$, $u_{2,2}\geq 0$, and $u_{2,1}^2\leq u_{1,1}u_{2,2}$.


\bibliographystyle{amsplain}
\bibliography{HSW-measureOm.bib}
\end{document}